\newtheorem{rem}{Remark}
\newcommand{\uu}{\mathbf{u}}
\newcommand{\uuu}{\mathbf{\hat{u}}}
\newcommand{\eee}{\mathbf{\hat{e}}}
\newcommand{\DD}{\mathbf{D}}
\newcommand{\Sig}{\boldsymbol{\Sigma}}
\newcommand{\TT}{\boldsymbol{\tau}}
\newcommand{\dt}{\delta t}
\newcommand{\nn}{\mathbf{n}}
\newcommand{\sss}{\mathbf{s}}
\newcommand{\tn}{t_n}
\newcommand{\tnn}{t_{n+1}}
\newcommand{\tnm}{t_{n-1}}
\newcommand{\enn}{^{n+1}}
\newcommand{\enm}{^{n-1}}
\newcommand{\en}{^{n}}
\newcommand{\io}{\int _{\Omega}}
\newcommand{\ww}{\mathbf{w}}
\newcommand{\vv}{\mathbf{v}}
\newcommand{\ee}{\mathbf{e}}
\newcommand{\LL}{\mathbf{L}}
\newcommand{\R}{\mathbb{R}}
\newcommand{\N}{\mathbb{N}}
\DeclareMathOperator{\Div}{div}
\DeclareMathOperator{\sign}{sign}
\DeclareMathOperator{\tr}{tr}
\DeclareMathOperator{\L2}{L^2(\Omega)}
\DeclareMathOperator{\Linf}{L^{\infty}(\Omega)}
\DeclareMathOperator{\Hach1}{H^1(\Omega)}
\DeclareMathOperator{\Ll2}{ \textbf{L} ^2(\Omega)}
\DeclareMathOperator{\Haach1}{ \textbf{H} ^1(\Omega)}
\DeclareMathOperator{\Hh10}{ \textbf{H} ^1_0(\Omega)}
\newtheorem{theorem}{Theorem}
\newtheorem{lemma}{Lemma}
\newtheorem{proposition}{Proposition}
\begin{document}

\title[A bi-projection method for incompressible Bingham flows]{A bi-projection method for incompressible Bingham flows with variable density, viscosity and yield stress}
\author{R\'enald Chalayer, Laurent Chupin and Thierry Dubois}
\address{Laboratoire de Math\'ematiques Blaise Pascal, UMR 6620, Universit\'e Clermont Auvergne and CNRS,
          Campus des C\'ezeaux, 3 place Vasarely, TSA 60026 CS 60026, 63178 Aubi\`ere cedex, France}
\email{Renald.Chalayer@uca.fr,Laurent.Chupin@uca.fr,Thierry.Dubois@uca.fr}
\date{\today}


\begin{abstract}
A new numerical scheme for solving incompressible Bingham flows with variable density, plastic viscosity and yield stress is proposed.
The mathematical and computational difficulties due to the non-differentiable definition of the stress tensor in the plug regions, \textit{i.e.}
where the strain-rate tensor vanishes, is overcome by using a projection formulation as in the Uzawa-like method for viscoplastic flows.
This projection definition of the plastic tensor is coupled with a fractional time-stepping scheme designed for Newtonian incompressible flows with
variable density. The plastic tensor is treated implicitly in the first sub-step of the fractional time-stepping scheme and a fixed-point iterative procedure
is used for its computation. A pseudo-time relaxation term is added into the Bingham projection whose effect is to ensure a geometric convergence of the
fixed-point algorithm. This is a key feature of the bi-projection scheme which provides a fast and accurate computation of the plastic tensor.
Stability and error analyses of the bi-projection scheme are provided. 
The use of the discrete divergence-free velocity to convect the density in
the mass conservation equation allows us to derive lower and upper bounds for the discrete density.
The error induced by the pseudo-time relaxation term is controlled by a prescribed numerical parameter so that a first-order estimate of the time
error is derived for the velocity field and the density, as well as the dependent parameters that are the plastic viscosity and the yield stress.
\end{abstract}

\keywords{Viscoplastic medium, Variable density, Bingham flows, Projection method, Fractional time-stepping, Stability and error analysis}

\maketitle

\section{Introduction}
Viscoplastic materials are common in many industrial processes, as in food industry with dairy products, chocolate confection, pulp suspensions, in the petroleum industry with drilling mud,
cement, waxy crude oil, and in various geophysical phenomena such flows of slurries, debris and lava. In many cases, mixtures of fluids with different properties occur as sliding of mud in water,
dam break of granular matter in air or water. In food industry, the cleaning of a viscoplastic material from conduits is achieved through the displacement induced by water steam or air.
Also in the process of oil recovery, a flow of gas, steam or even water is used to push and transport heavy and highly viscous fluids.

Viscoplastic fluids flow only if the stress exceeds a threshold, namely the yield stress, otherwise they do not deform and behave like solids.
Typical viscoplastic fluid flows exhibit both yielded and unyielded regions. The rigid structures, where the deformation rates vanish, may either be plugs moving as solid bodies or frozen zones,
\textit{i.e.} where the flow is at rest (no velocity). The most commonly used yield stress model to account for this peculiar behaviour is the Bingham model. In the constitutive law, the stress tensor
is not prescribed until the yield stress is reached while, above the threshold, it is proportional to the strain-rate tensor.
This singular change of rheological properties induces mathematical difficulties. From a computational point of view, reproducing accurately the yield surfaces separating yielded and unyielded regions
is challenging (see \cite{Saramito_Wachs_2017} for a recent review on numerical simulation of viscoplastic flows).
Two approaches have been mainly used both in the fields of theoretical studies and numerical simulations, namely regularization methods (see for instance~\cite{Bercovier-Engelman}
and~\cite{Papanastasiou}) and optimization techniques relying on the theory of variational inequalities (see~\cite{DuvautLions} and~\cite{GLT_1981}).
Note that theoretical analyses concerning the questions of existence, uniqueness and regularity of solutions have been investigated in~\cite{DuvautLions} and more recently
in~\cite{Fuchs1998,Fuchs2000}. 
Due to their simplicity regularization methods are appealing~: the singular Bingham law is replaced by a regularized form making the flow viscous everywhere with a large but finite viscosity in
the unyielded regions. The main drawback of this approach is the difficulty to accurately locate the yield surfaces which are not accounted for by the regularized model.
A review for this approach can be found in~\cite{Frigaard_Nouar2005}.

The second approach used to overcome the difficulty due to the non-differentiability of the definition of the stress tensor is based on the variational formulation of the Navier-Stokes equations
which leads to a saddle-point problem. This optimization problem can be solved by using the Uzawa-like method or the augmented Lagrangian method (see~\cite{OverviewBingham}
and the references therein). Both algorithms allow for accurate but rather expansive numerical simulations when a fine description of the plug regions and the yield surfaces is required
(see~\cite{Muravleva,Marly2017}). Efforts have been recently done to accelerate  the convergence of the Augmented Lagrangian algorithm (see~\cite{Treskatis2016,Saramito2016}).
 In~\cite{Saramito2016}, a modified Newton method achieving superlinear convergence is proposed.
The Uzawa-like method (see~\cite{OverviewBingham,Muravleva}) relies on a (pointwise) projection operator for the computation of the extra (plastic part) stress tensor. In~\cite{OverviewBingham}, the authors
suggested to add a pseudo-time relaxation term in the Bingham projection operator in order to increase the convergence rate of the fixed-point algorithm used to compute the plastic tensor.
In~\cite{ChupinDubois}, this approach was coupled with a projection scheme~\cite{Chorin,Temam} for the time discretization of the Navier-Stokes equations. The resulting bi-projection scheme was analyzed in terms
of stability study and error estimates.
More precisely, it was shown in~\cite{ChupinDubois} that the pseudo-relaxation term in the Bingham projection does not deteriorate the first-order accuracy of the time discretization scheme.
Indeed, the error is of the order $\sqrt{\delta t (1+\theta)}$ which is of first order if the prescribed relaxation parameter $\theta$ is of the order of the time step $\delta t$.
Through various numerical simulations in standard configurations, the bi-projection scheme was shown in~\cite{ChupinDubois} to be able to reproduce the characteristic property of Bingham
flows to return to rest in finite time, to accurately predict the plug regions, and to be efficient at both large Bingham and high Reynolds numbers.

The main contribution of the present paper is to extend the numerical scheme proposed in~\cite{ChupinDubois} to incompressible Bingham flows with variable (in space and time)
density, plastic viscosity and yield stress. The key feature of the projection method for the time discretization of the Navier-Stokes equations is to decouple the computations of the velocity field
and the pressure. In a first step, a predicted velocity which is not divergence free is computed from which the gradient of a scalar function (a pseudo-pressure) is then subtracted resulting in
a solenoidal velocity field. The Helmholtz decomposition is invoked in the second step so that the (pseudo-)pressure is solution of a Poisson equation. 
While introduced in the pioneering works of Chorin~\cite{Chorin} and Temam~\cite{Temam} at the end of the 60's in the case of homogeneous fluid flows, \textit{i.e.} with
constant density and viscosity, projection methods have first been analyzed in terms of error estimates by Shen in~\cite{Shen1order} in the early 90's.
Let us also mention~\cite{GQ-error} for an analysis of a finite element fully discrete version.
Fractional step schemes in the spirit of projection methods have been later on extended to incompressible Navier-Stokes equations with variable density and used in numerical simulations, see for
example~\cite{BellMarcus,Almgren-al_1998}. Concerning the mathematical analysis of projection schemes for variable-density flows, stability results have been proved in~\cite{GuermondQuartapelle,PyoShen3}
and error estimates have been derived in~\cite{GuermondSalgado5} for a fully discrete version based on the finite element method for the spatial discretization.
For density-variable flows, if the projection step is achieved as in the homogeneous case, \textit{i.e.} by using the Helmholtz decomposition, 
a variable coefficient elliptic equation has to be solved at each time step (see~\cite{GuermondSalgado4} for instance) in order to compute the pressure, inducing both mathematical and
computational difficulties. In~\cite{GuermondSalgado4,GuermondSalgado5} an alternative approach based on the interpretation of projection methods as penalty methods is used.
As a result, the fractional time-stepping methods proposed in~\cite{GuermondSalgado4,GuermondSalgado5} necessitate the resolution of only one Poisson equation per time step and are therefore
more efficient from a computational point of view. 

The aim of this paper is to make use of these recent developments to propose a new time discretization scheme for solving the equations describing the motion of Bingham fluids with variable
 density, plastic viscosity and yield stress. Unlike in \cite{GuermondSalgado4,GuermondSalgado5}, we consider the temporal semi-discrete equations. Also, as a main difference, the divergence-free velocity field is used as
convective velocity in the discrete version of the mass conservation equation allowing to derive lower and upper bounds as well as error estimates on the density. 
Concerning the treatment of the plastic tensor, a fixed-point algorithm is invoked to solve the Bingham projection, in the spirit of \cite{Muravleva}. As in~\cite{ChupinDubois}, a pseudo-time
relaxation term is added in the computation of the plastic tensor through the Bingham projection in order to provide a geometric convergence of this algorithm.
The objective of this paper is to perform stability study and error analyses of the proposed bi-projection scheme. To the best of our knowledge, no stability and convergence analysis of
projection schemes for incompressible variable density Bingham flows have been done yet. As in the case of homogeneous Bingham flows, the bi-projection scheme is proved
to be stable and first order accurate as long as the relaxation parameter is taken of the order of the time step.

The paper is organized as follows. In Section 2, the mathematical formulation for a Bingham model with variable coefficients is introduced. A projection formulation for the plastic part of the
stress tensor which will be suitable for the construction of our scheme is provided. In Section 3, some additional notations are introduced and preliminary results, that will be useful in the
following sections, are given. In Section 4, the bi-projection scheme, as a time approximation of the continuous model, is shown to be well-posed and bounds on the density,
and the dependent coefficients, that are the viscosity and yield stress, are obtained. Sections 5 and 6 are respectively devoted to the stability and error (convergence) analyses of the scheme. 

\section{The model of a Bingham viscoplastic flow with variable density}

\subsection{The mathematical model}

Let $T>0$ be a positive real number and $\Omega$ a bounded domain of $\mathbb{R}^3$; we denote by $\Gamma$ the boundary of $\Omega$. The isothermal flow of an incompressible viscoplastic medium with variable density is modeled by the following equations, satisfied by the velocity $\uu$, the density $\rho$ and the pressure $p$,
\begin{equation} \label{ModelContinuEq1}
\begin{cases}
\partial _t \rho + \uu \cdot \boldsymbol{\nabla} \rho = 0,\\
\rho \big ( \partial _t \uu + \uu \cdot \boldsymbol{\nabla} \uu \big ) + \boldsymbol{\nabla} p = \Div \TT, \\
\Div \uu =0,
\end{cases}
\end{equation}
in $(0,T) \times \Omega$. The deviatoric stress tensor $\TT$ is defined by the relation
\begin{equation} \label{ModelContinuEq2}
\TT = 2 \mu (\rho) \DD \uu + \alpha ( \rho ) \Sig,
\end{equation}
with $\DD \uu = \frac{1}{2} \big ( \boldsymbol{\nabla} \uu +{}^{T}\!\boldsymbol{\nabla}\uu\big)$ the strain-rate tensor and $\Sig$ is the extra (plastic) part of the stress tensor.
The plastic viscosity $\mu$ and the yield stress $\alpha$ are functions of the density $\rho$ of class $\mathcal{C} ^1$ and are 
assumed to be respectively positive and non-negative.
We denote by
\begin{equation*}
  \mathcal{Q}_0 = \lbrace (t,\boldsymbol{x}) \in (0,T) \times \Omega ; \quad \DD \uu (t,\boldsymbol{x})= \boldsymbol{0}  \rbrace
\end{equation*}
the sub-domain of $(0,T)\times\Omega$ where the strain-rate tensor vanishes. The plastic tensor $\Sig \in \mathbb{R}^{3 \times 3}$ is defined by (see \cite{DuvautLions} for instance)
\begin{equation} \label{PartiePlastiqueContinueD1}
\begin{cases} \displaystyle
\Sig(t,\boldsymbol{x}) = \frac{\DD \uu (t,\boldsymbol{x})}{\vert \DD \uu (t,\boldsymbol{x}) \vert } \quad \text{for} \quad (t,\boldsymbol{x}) \in \big ( (0,T) \times \Omega \big ) \setminus \mathcal{Q}_0 , \\[0,25cm]
\vert \Sig (t,\boldsymbol{x}) \vert \leq 1 , \quad {}^{T} \! \Sig (t,\boldsymbol{x}) = \Sig (t,\boldsymbol{x}) , \quad \tr \Sig (t,\boldsymbol{x}) =0 \quad  \text{for} \quad (t,\boldsymbol{x}) \in \mathcal{Q}_0,
\end{cases}
\end{equation}
where, for all $\boldsymbol{\lambda} \in \mathbb{R}^{3 \times 3}$, we denote by
\begin{equation} \label{2nd-Inv}
  \vert \boldsymbol{\lambda} \vert^2 = \frac{1}{2} \tr \big ( {}^{T} \! \boldsymbol{\lambda} \boldsymbol{\lambda} \big ) 
\end{equation}
its second invariant (see~\cite{MR3205441}).
The system \eqref{ModelContinuEq1}-\eqref{ModelContinuEq2}-\eqref{PartiePlastiqueContinueD1} is supplemented with the following initial and boundary conditions
\begin{equation} \label{CondIniBoundModCont}
\rho|_{t=0} = \rho_0 , \quad  \uu |_{t=0} =  \uu _0,  \quad  \uu |_{\Gamma} =0.
\end{equation}
Finally, we assume there exists $\rho_1 , \rho_2 >0$,
\begin{equation} \label{CondIniBoundModCont2}
\begin{aligned} 
\rho_1 \leq \rho _0 \leq \rho_2, \quad \text{\textit{a.e.} in } \Omega, \quad \text{and} \quad  \uu_0 \in \Hh10 \quad \text{with} \quad \Div \uu _0 = 0.
\end{aligned}
\end{equation}
If the velocity field $\mathbf{u}$ solution of~\eqref{ModelContinuEq1} is sufficiently regular (see~\cite{boyerfabrie}), as $\rho$ is solution of a transport equation, $\mathbf{u}$ is divergence free and the initial density satisfies~\eqref{CondIniBoundModCont2}, we have
\begin{equation} \label{DensityBounds}
    \rho_1 \le \rho(t,\mathbf{x}) \le \rho_2 \quad\textrm{\textit{a.e.} in } (0,T)\times\Omega.
\end{equation}
This ensures that, almost everywhere in $(0,T)\times\Omega$, we have
\begin{equation} \label{VarCoeff}
  \begin{aligned}
     & \exists \,\mu_1, \mu_2 > 0, \quad \mu_1 \leq \mu (\rho(t,\mathbf{x})) \leq \mu_2,  \\
     & \exists \,\alpha_1, \alpha_2 \ge 0,  \quad \alpha_1 \leq \alpha (\rho(t,\mathbf{x})) \leq \alpha_2.
  \end{aligned}
\end{equation} 
In the Newtonian case, such regularity results can be obtained if the data are small enough (see for instance~\cite{Huang2015} and the references therein). For yield stress fluids,
we assume that similar regularity results can be obtained. In the rest of the paper, we assume that solutions of~\eqref{ModelContinuEq1} satisfying~\eqref{DensityBounds} exists so that~\eqref{VarCoeff} is satisfied.
%



\subsection{A projection formulation}

We now introduce a projection formulation for the plastic tensor $\Sig$, which is more suited for the construction of our numerical scheme.
\begin{proposition} \label{prop:PropProj}
For all $ \ell >0$, the condition \eqref{PartiePlastiqueContinueD1} is equivalent to the relation
\begin{equation} \label{PartiePlastiqueContinueD2}
\Sig (t,\boldsymbol{x}) = \mathbb{P} \big ( \Sig (t,\boldsymbol{x}) + \ell \DD \uu (t,\boldsymbol{x}) \big ),
\end{equation}
where $\mathbb{P}$ is the projection operator on the closed convex set defined by
\begin{equation*}
   \Lambda = \lbrace \boldsymbol{\lambda} \in \mathbb{R}^{3 \times 3} ; \quad \vert \boldsymbol{\lambda} \vert \leq 1 ,
   \quad {}^{T} \! \boldsymbol{\lambda} = \boldsymbol{\lambda} , \quad \tr \boldsymbol{\lambda} = 0\rbrace.
\end{equation*}
\end{proposition}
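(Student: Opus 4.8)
The plan is to show the two implications separately, exploiting the standard variational characterization of the projection onto a closed convex set: for $\boldsymbol{\mu} \in \mathbb{R}^{3\times 3}$, one has $\Sig = \mathbb{P}(\boldsymbol{\mu})$ if and only if $\Sig \in \Lambda$ and $\langle \boldsymbol{\mu} - \Sig, \boldsymbol{\lambda} - \Sig \rangle \le 0$ for all $\boldsymbol{\lambda} \in \Lambda$, where $\langle \cdot,\cdot\rangle$ is the inner product associated with the norm $|\cdot|$ from~\eqref{2nd-Inv}, namely $\langle \boldsymbol{\lambda},\boldsymbol{\eta}\rangle = \tfrac{1}{2}\tr({}^{T}\!\boldsymbol{\lambda}\,\boldsymbol{\eta})$. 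Here $\boldsymbol{\mu} = \Sig + \ell\DD\uu$, so the inequality reads $\ell\langle \DD\uu, \boldsymbol{\lambda} - \Sig\rangle \le 0$ for all $\boldsymbol{\lambda}\in\Lambda$, i.e. (dividing by $\ell>0$)
\begin{equation*}
\langle \DD\uu, \boldsymbol{\lambda}\rangle \le \langle \DD\uu, \Sig\rangle \quad \text{for all } \boldsymbol{\lambda}\in\Lambda.
\end{equation*}
So \eqref{PartiePlastiqueContinueD2} is equivalent to: $\Sig\in\Lambda$ and $\Sig$ maximizes $\boldsymbol{\lambda}\mapsto\langle\DD\uu,\boldsymbol{\lambda}\rangle$ over $\Lambda$. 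Note this characterization is independent of $\ell$, which already explains the "for all $\ell>0$" in the statement — I would remark on this. Since $\DD\uu$ is symmetric with zero trace (being a strain-rate tensor of a divergence-free field, though trace-freeness is all that is needed, and in fact one can restrict attention to the symmetric part anyway), the maximum of $\langle\DD\uu,\boldsymbol{\lambda}\rangle$ over the unit ball intersected with the symmetric trace-free subspace equals $|\DD\uu|$, attained at $\boldsymbol{\lambda} = \DD\uu/|\DD\uu|$ when $\DD\uu\ne\boldsymbol{0}$, and equals $0$ (attained at every $\boldsymbol{\lambda}$, trivially) when $\DD\uu=\boldsymbol{0}$.

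\textbf{\eqref{PartiePlastiqueContinueD1} $\Rightarrow$ \eqref{PartiePlastiqueContinueD2}.} On $((0,T)\times\Omega)\setminus\mathcal{Q}_0$ we have $\Sig=\DD\uu/|\DD\uu|\in\Lambda$ and $\langle\DD\uu,\Sig\rangle = |\DD\uu| \ge \langle\DD\uu,\boldsymbol{\lambda}\rangle$ for all $\boldsymbol{\lambda}\in\Lambda$ by Cauchy–Schwarz and $|\boldsymbol{\lambda}|\le 1$; hence the variational inequality holds and $\Sig=\mathbb{P}(\Sig+\ell\DD\uu)$. On $\mathcal{Q}_0$, $\DD\uu=\boldsymbol{0}$, so $\Sig+\ell\DD\uu=\Sig$, and since $|\Sig|\le 1$, ${}^T\!\Sig=\Sig$, $\tr\Sig=0$ give $\Sig\in\Lambda$, the projection fixes it: $\mathbb{P}(\Sig)=\Sig$.

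\textbf{\eqref{PartiePlastiqueContinueD2} $\Rightarrow$ \eqref{PartiePlastiqueContinueD1}.} Assume $\Sig=\mathbb{P}(\Sig+\ell\DD\uu)$. Then $\Sig\in\Lambda$ (the projection lands in $\Lambda$), which gives the three algebraic conditions $|\Sig|\le 1$, ${}^T\!\Sig=\Sig$, $\tr\Sig=0$ — in particular the entire second line of \eqref{PartiePlastiqueContinueD1}, valid on all of $(0,T)\times\Omega$ and a fortiori on $\mathcal{Q}_0$. It remains to check the first line on $((0,T)\times\Omega)\setminus\mathcal{Q}_0$, i.e. where $\DD\uu\ne\boldsymbol{0}$. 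There the variational inequality says $\langle\DD\uu,\Sig\rangle\ge\langle\DD\uu,\boldsymbol{\lambda}\rangle$ for all $\boldsymbol{\lambda}\in\Lambda$; taking $\boldsymbol{\lambda}=\DD\uu/|\DD\uu|\in\Lambda$ yields $\langle\DD\uu,\Sig\rangle\ge|\DD\uu|$. On the other hand Cauchy–Schwarz and $|\Sig|\le 1$ give $\langle\DD\uu,\Sig\rangle\le|\DD\uu|\,|\Sig|\le|\DD\uu|$. Hence equality holds in Cauchy–Schwarz with $|\Sig|=1$, forcing $\Sig$ to be the positive multiple of $\DD\uu$ of norm $1$, that is $\Sig=\DD\uu/|\DD\uu|$. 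This is exactly the first line of \eqref{PartiePlastiqueContinueD1}.

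\textbf{Main obstacle.} The argument is essentially routine once the variational inequality characterization is invoked; the only point requiring minor care is the equality case in Cauchy–Schwarz in the matrix inner product $\langle\cdot,\cdot\rangle$, namely that $\langle\DD\uu,\Sig\rangle = |\DD\uu|\,|\Sig|$ with $|\Sig| = 1$ implies $\Sig = \DD\uu/|\DD\uu|$ — this is the standard rigidity of Cauchy–Schwarz on the inner-product space $(\mathbb{R}^{3\times 3},\langle\cdot,\cdot\rangle)$ and needs $\DD\uu\neq\boldsymbol{0}$, which is guaranteed off $\mathcal{Q}_0$. One should also note, when writing the projection characterization, that $\Lambda$ is indeed closed, convex and nonempty (it contains $\boldsymbol{0}$), so $\mathbb{P}$ is well defined and single-valued; this, together with the $\ell$-independence observed above, completes the equivalence.
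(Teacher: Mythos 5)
Your proof is correct, but it follows a genuinely different route from the paper. The paper works with the explicit radial formula \eqref{ProjBouleUnitHilbert} for $\mathbb{P}$ (valid because the argument $\Sig+\ell\DD\uu$ is symmetric and trace-free) and argues by direct computation: in the forward direction it evaluates $\vert\Sig+\ell\DD\uu\vert=1+\ell\vert\DD\uu\vert>1$ and simplifies, and in the converse it first excludes $\vert\Sig+\ell\DD\uu\vert\le 1$ (which would force $\ell\DD\uu=\boldsymbol{0}$) and then solves the resulting identity for $\vert\DD\uu\vert$ to recover $\Sig=\DD\uu/\vert\DD\uu\vert$. You instead invoke the variational-inequality characterization of the projection onto the closed convex set $\Lambda$, reduce \eqref{PartiePlastiqueContinueD2} to the $\ell$-independent statement that $\Sig\in\Lambda$ maximizes $\langle\DD\uu,\cdot\rangle$ over $\Lambda$, and settle the converse through the equality case of Cauchy--Schwarz. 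Your approach buys a transparent explanation of the ``for all $\ell>0$'' in the statement, avoids any reliance on the explicit formula (and hence on the implicit restriction of \eqref{ProjBouleUnitHilbert} to symmetric trace-free arguments), and replaces the paper's slightly terse exclusion step by a clean rigidity argument; the paper's computation is shorter and more elementary, needing no convex-analysis machinery. Both arguments use, and should state, that $\DD\uu$ is symmetric with $\tr\DD\uu=\Div\uu=0$, so that $\DD\uu/\vert\DD\uu\vert\in\Lambda$ off $\mathcal{Q}_0$; you do note this, so no gap remains.
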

\begin{proof}
If $\DD \uu (t,\boldsymbol{x})=0$, the equivalence is obvious as we have in both cases $\Sig (t,\boldsymbol{x}) \in \Lambda$. \\
We now assume that $\DD \uu (t,\boldsymbol{x}) \neq 0.$ An explicit expression of the projection is given by
\begin{equation} \label{ProjBouleUnitHilbert} \displaystyle
\mathbb{P}(\boldsymbol{\lambda})=
\begin{cases}
\boldsymbol{\lambda} \quad & \text{if} \quad \vert \boldsymbol{\lambda} \vert \leq 1, \\
\frac{\boldsymbol{\lambda}}{\vert \boldsymbol{\lambda} \vert} \quad & \text{if} \quad \vert \boldsymbol{\lambda} \vert > 1.
\end{cases}
\end{equation}
Let $\ell>0$ be a positive real number. If \eqref{PartiePlastiqueContinueD1} is satisfied, we have $\displaystyle \Sig(t,\boldsymbol{x})=\frac{\DD \uu (t,\boldsymbol{x})}{\vert \DD \uu (t,\boldsymbol{x}) \vert }$. Therefore,
\begin{equation*}
  \Big \vert \Sig(t,\boldsymbol{x}) + \ell \DD \uu (t,\boldsymbol{x}) \Big \vert =  1 + \ell \big \vert \DD \uu (t,\boldsymbol{x}) \big \vert > 1 ,
\end{equation*}
so that, according to \eqref{ProjBouleUnitHilbert}, we have
\begin{equation*}
  \mathbb{P} \big ( \Sig (t,\boldsymbol{x}) + \ell \DD \uu (t,\boldsymbol{x}) \big ) = \frac{\Sig (t,\boldsymbol{x}) + \ell \DD \uu (t,\boldsymbol{x})}{\vert \Sig (t,\boldsymbol{x}) + \ell \DD \uu (t,\boldsymbol{x}) \vert} = \frac{\DD \uu (t,\boldsymbol{x}) }{\vert \DD \uu (t,\boldsymbol{x}) \vert} = \Sig (t,\boldsymbol{x}),
\end{equation*}
hence, \eqref{PartiePlastiqueContinueD2} follows.\\
Let us now assume that \eqref{PartiePlastiqueContinueD2} is fulfilled. We have $\vert \Sig(t,\boldsymbol{x}) + \ell \DD \uu (t,\boldsymbol{x}) \vert > 1$ (otherwise, $\ell=0$ due to \eqref{ProjBouleUnitHilbert}).
Therefore, according to \eqref{ProjBouleUnitHilbert} we have
\begin{equation*}
  \Sig (t,\boldsymbol{x}) = \frac{\Sig (t,\boldsymbol{x}) + \ell \DD \uu (t,\boldsymbol{x})}{\vert \Sig (t,\boldsymbol{x}) + \ell \DD \uu (t,\boldsymbol{x}) \vert},
\end{equation*}
so that $\vert \Sig (t,\boldsymbol{x}) \vert = 1 $. We also deduce that
\begin{equation*}
  \Sig (t,\boldsymbol{x}) \Big ( \vert \Sig(t,\boldsymbol{x}) + \ell \DD \uu (t,\boldsymbol{x}) \vert -1 \Big ) = \ell \DD \uu (t,\boldsymbol{x}),
\end{equation*}
so that $ \displaystyle \vert \DD \uu (t,\boldsymbol{x}) \vert = \frac{\vert \Sig(t,\boldsymbol{x}) + \ell \DD \uu (t,\boldsymbol{x}) \vert -1}{\ell}.$ Then $ \displaystyle\Sig (t,\boldsymbol{x})=\frac{\DD\uu (t,\boldsymbol{x})}{\vert \DD \uu (t,\boldsymbol{x}) \vert}$ which concludes the proof.   
\end{proof}

\begin{rem} When $ \DD \uu$ is only Lebesgue-integrable in space, we define $ \tilde{\Sig} $ as above with a representative of $\DD \uu$. Then, we define $\Sig$ as the class of $ \tilde{\Sig} $. Introducing
\begin{equation*}
  \boldsymbol{\Lambda} = \lbrace \boldsymbol{f} \in L^2(\Omega)^{3 \times 3} ; \quad \vert \boldsymbol{f} \vert \leq 1, \quad {}^{T}\!\boldsymbol{f}=\boldsymbol{f}, \quad \tr \boldsymbol{f} =0,  \quad \text{\textit{a.e.} in } \Omega \rbrace,
\end{equation*}
we have $\Sig(t) \in \boldsymbol{\Lambda}$.
\end{rem}

With the help of Proposition \ref{prop:PropProj}, we can rewrite the mathematical model for incompressible visco-plastic flows with variable density, viscosity and yield stress as, for any $\ell>0$,
\begin{equation} \label{ModelContinuFinal}
\begin{cases}
\partial _t \rho + \uu \cdot \boldsymbol{\nabla} \rho = 0,\\
\rho \big ( \partial _t \uu + \uu \cdot \boldsymbol{\nabla} \uu \big ) + \boldsymbol{\nabla} p - 2 \Div \big ( \mu ( \rho ) \DD \uu \big ) = \Div \big ( \alpha ( \rho ) \Sig \big ),\\
\Sig = \mathbb{P} ( \Sig + \ell \DD \uu ), \quad \Div \uu =0, 
\end{cases}
\end{equation}
which is supplemented with the initial and boundary conditions \eqref{CondIniBoundModCont}. Note that for constant density flows, the system~\eqref{ModelContinuFinal} reduces to the mathematical model studied in~\cite{ChupinDubois}.
 
\section{Preliminaries}
We now introduce some of the notations used in the sequel. For two vectors $\uu$ and $\vv$ in $\R ^3 $, we denote by $\uu \cdot \vv$ their inner product, $\uu \cdot \vv = \sum_{1 \leq i \leq 3} u_i v_i $, and by $\vert \cdot \vert$ the associated norm. For two tensors $\boldsymbol{A}$ and $\boldsymbol{B}$ in $\R^{3 \times 3}$, we denote by $\boldsymbol{A}:\boldsymbol{B}$ their inner product, namely
\begin{equation*}
  \boldsymbol{A}:\boldsymbol{B} = \sum_{1 \leq i,j \leq 3} A_{ij} B_{ij}.
\end{equation*}
Let us note that the tensorial norm $\vert \cdot \vert$ defined by \eqref{2nd-Inv} is not induced by the above inner product. \\
We will make use of the standard notations $L^{p}(\Omega)$, $H^k(\Omega)$ and $H^k _0 (\Omega)$ to denote the usual Lebesque and Sobolev spaces over $\Omega$. We denote by $L^2_0(\Omega)$ the subspace of $L^2(\Omega)$ of functions with vanishing mean value. The norm corresponding to $H^k(\Omega)$ will be denoted by $\Vert \cdot \Vert_k$. In particular, we will use $\Vert \cdot \Vert$ to denote the norm in $L^2(\Omega)$ and $(\cdot,\cdot)$ to denote the scalar product in $L^2(\Omega)$. For each space above, we will use bold letters to denote their vectorial counterpart: for example, $\L2^3$ will be denoted $\Ll2$. Also other functional spaces may be considered in the sequel. In such cases, they are always indicated as subscripts: for instance $ \Vert\cdot\Vert_{L^{\infty}(\Omega)}$ denotes the norm associated to the space $ L^{\infty}(\Omega)$.

The incompressibility constraint leads us to consider the following space
\begin{equation*}
  \boldsymbol{\mathcal{H}} = \lbrace \uu \in \Ll2; \quad \Div \uu = 0, \quad \uu \cdot \nn |_{\Gamma} =0 \rbrace,
\end{equation*}
where $\nn$ is the unit outward normal to $\Gamma$. For any sequence $(a_n)_{n \in \N}$, we introduce
\begin{equation*}
  \delta a_n = a_n - a_{n-1} \quad\textrm{and}\quad \delta^2 a_n = \delta ( \delta a_n) = a_n -2 a_{n-1} + a_{n-2} .
\end{equation*}
The following lemma of Gronwall type will be used to derive the error estimates in Section 6. Note that a proof can be found in~\cite{lemmegronwalldiscret}.
\begin{lemma}{(Discrete Gronwall lemma).}\label{lem:LGdiscret}
Let $M \in \mathbb{N}^*$, $\tau$, $B$ and $C$ be non-negative parameters, $(y^n)_n,(h^n)_n,(g^n)_n$ and $(f^n)_n$ be non-negative sequences satisfying, for all $m$ such that $0 \leq m \leq M$,
\begin{equation*}
  y^m + \tau \sum _{n=0} ^m h^n \leq B + \tau \sum _{n=0}^m(g^n y^n + f^n), \quad  \text{with} \quad \tau \sum _{n=0} ^M g^n \leq C.
\end{equation*}
Assume $\tau g^n < 1$ for all $n$ such that $0 \leq n \leq M$ and let
\begin{equation*}
 \sigma_n = (1 - \tau g^n)^{-1} , ~~~~ \sigma = \max _{0 \leq n \leq M} \sigma_n,
\end{equation*}
then, for all $m$ such that $0 \leq m \leq M$, we have
\begin{equation*}
  y^m + \tau \sum _{n=0} ^m h^n \leq exp(\sigma C)\Bigl(B + \tau \sum _{n=0} ^m f^n\Bigr).
\end{equation*}
\end{lemma}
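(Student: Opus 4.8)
The plan is to prove a slightly sharper statement, namely that for every $m$ with $0\le m\le M$,
\[
  y^m+\tau\sum_{n=0}^m h^n\;\le\;\Bigl(\prod_{n=0}^m\sigma_n\Bigr)\Bigl(B+\tau\sum_{n=0}^m f^n\Bigr),
\]
and then to bound the finite product $\prod_{n=0}^m\sigma_n$ by $\exp(\sigma C)$. Throughout I abbreviate $A^m=y^m+\tau\sum_{n=0}^m h^n$ and $D^m=B+\tau\sum_{n=0}^m f^n$, and I note that $(D^m)_m$ is non-negative and non-decreasing.

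\emph{Step 1 (absorbing the diagonal term).} Fix $m$ and split off the index $n=m$ in the sum $\tau\sum_{n=0}^m g^n y^n$ on the right-hand side of the hypothesis; this gives $(1-\tau g^m)\,y^m+\tau\sum_{n=0}^m h^n\le D^m+\tau\sum_{n=0}^{m-1}g^n y^n$ (the last sum being empty when $m=0$). Since $0\le\tau g^m<1$ we have $\sigma_m=(1-\tau g^m)^{-1}\ge 1$; multiplying by $\sigma_m$ and then using $\sigma_m\ge 1$ together with $h^n\ge 0$ to bring the coefficient of $\tau\sum_{n=0}^m h^n$ back down to one, I obtain the recursion
\[
  A^m\;\le\;\sigma_m\Bigl(D^m+\tau\sum_{n=0}^{m-1}g^n y^n\Bigr),\qquad 0\le m\le M.
\]

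\emph{Step 2 (induction).} I then prove $A^m\le\bigl(\prod_{n=0}^m\sigma_n\bigr)D^m$ by induction on $m$; the base case $m=0$ is the recursion itself. For the inductive step I substitute $y^n\le A^n$ (valid because $h^n\ge 0$) and the induction hypothesis for $n\le m-1$ into the recursion, and then use $D^n\le D^m$ and $g^n\ge 0$ to factor $D^m$ out front. What remains is the identity
\[
  1+\tau\sum_{n=0}^{m-1}g^n\prod_{k=0}^n\sigma_k\;=\;\prod_{k=0}^{m-1}\sigma_k,
\]
which telescopes once one observes that $\sigma_n-1=\tau g^n\sigma_n$, hence $\tau g^n\prod_{k=0}^n\sigma_k=\prod_{k=0}^n\sigma_k-\prod_{k=0}^{n-1}\sigma_k$. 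This yields $A^m\le\sigma_m D^m\prod_{k=0}^{m-1}\sigma_k=\bigl(\prod_{k=0}^m\sigma_k\bigr)D^m$, which closes the induction.

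\emph{Step 3 (conclusion).} Finally, using $\sigma_n=1+\tau\sigma_n g^n\le\exp(\tau\sigma_n g^n)$, I bound $\prod_{n=0}^m\sigma_n\le\exp\bigl(\tau\sum_{n=0}^m\sigma_n g^n\bigr)\le\exp\bigl(\sigma\,\tau\sum_{n=0}^M g^n\bigr)\le\exp(\sigma C)$, where I have used $\sigma_n\le\sigma$, $g^n\ge 0$ and $\tau\sum_{n=0}^M g^n\le C$. Together with Step 2, this is exactly the asserted inequality. I expect no serious obstacle: the only points requiring a little care are the absorption in Step 1, where positivity of $h^n$ must be invoked after multiplying by $\sigma_m\ge 1$, and the telescoping identity of Step 2, which is precisely the mechanism that prevents the factors $\sigma_n$ from accumulating in front of $D^m$ rather than inside the product; the remaining manipulations are routine.
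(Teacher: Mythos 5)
Your proof is correct and complete. Note that the paper itself does not prove this lemma (it only refers to the reference \cite{lemmegronwalldiscret}), so there is no in-paper argument to diverge from; what you give is the standard proof of the discrete Gronwall inequality found in that literature: absorb the diagonal term $\tau g^m y^m$ using $\tau g^m<1$ (which is exactly where $\sigma_m$ enters), prove the sharper product bound $y^m+\tau\sum_{n=0}^m h^n\le\bigl(\prod_{n=0}^m\sigma_n\bigr)\bigl(B+\tau\sum_{n=0}^m f^n\bigr)$ by induction via the telescoping identity $\tau g^n\prod_{k=0}^n\sigma_k=\prod_{k=0}^n\sigma_k-\prod_{k=0}^{n-1}\sigma_k$, and then pass to the exponential with $1+x\le e^x$, $\sigma_n\le\sigma$, $g^n\ge 0$ and $\tau\sum_{n=0}^M g^n\le C$. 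The delicate points are handled correctly (you use $\sigma_m\ge 1$ and $h^n\ge 0$ in the absorption, $y^n\le A^n$ and the monotonicity of $D^m$ in the induction), and the intermediate product bound you establish is in fact slightly sharper than the stated conclusion.
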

Finally, we define the bilinear form $\tilde{B}$ by
\begin{equation*}
  \tilde{B} (\uu,\vv) = (\uu\cdot\boldsymbol{\nabla})\vv + \frac{\vv}{2} \Div \uu,
\end{equation*}
and the associated trilinear form: $\tilde{b}(\uu,\vv,\ww) = (\tilde{B}(\uu,\vv),\ww).$
It can be easily shown that
\begin{equation} \label{CalculBaseStability}
\tilde{b}( \uu, \vv, \vv ) = 0 , \quad \forall \uu \in \Haach1 \quad \text{with} \quad \uu \cdot \nn |_{\Gamma} = 0 , \quad \forall \vv \in \Haach1 .
\end{equation}
Another orthogonality property will be also useful in the sequel, namely we have
\begin{equation} \label{CalculBaseStability0}
   \int_\Omega \varphi\, \uu\cdot\boldsymbol{\nabla} \varphi\, d\boldsymbol{x} = 0, \quad \forall \uu\in\textbf{L}^3(\Omega) \cap \boldsymbol{\mathcal{H}}, \, \forall \varphi\in H^1(\Omega). 
\end{equation}

\section{The bi-projection scheme}

\subsection{The temporal discretization}
In this section, we propose a scheme to discretize with respect to the time variable the model \eqref{ModelContinuFinal}. Let us introduce some additional notations. 
Let $N>0$ be an integer and $(\tn)_{n \in \lbrace 0,...,N \rbrace }$ a sequence of discrete time levels in $[0,T]$. For the sake of simplicity, we consider a uniform discretization, that is
\begin{equation*}
  \tn = n \dt \qquad \text{with} \quad \dt = \frac{T}{N}.
\end{equation*}
We introduce two additional numerical parameters $r>0$ and $\theta > 0$.

We start with $\rho^0$, $\uu^0 $, $ \uuu ^0 $, $q^0$, $p^0$ and $\Sig ^0$ (initialization step). We choose $\rho^0 = \rho_0$, $\uu ^0 = \uuu ^0 = \uu_0$, $q^0 = 0$, and arbitrary $\Sig ^0 $ and $p^0$. \\
For $n \geq 0$, assuming that $\rho \en $, $ \uu \en $, $\uuu \en$, $q^n$, $p^n$ and $ \Sig \en$ are known, we first compute $\rho\enn $ by solving 
\begin{equation} \label{PhaseEquation}
  \frac{\rho^{n+1}-\rho^n}{\dt} + \uuu^n \cdot \boldsymbol{\nabla} \rho^{n+1} =0,
\end{equation}
and, with the help of \eqref{VarCoeff}, we define $\mu^{n+1}$ and $\alpha ^{n+1}$ by the following relations
\begin{equation} \label{PhaseEquation2}
\mu^{n+1} = \mu (\rho^{n+1} ), \quad \alpha^{n+1} = \alpha (\rho^{n+1} ).
\end{equation}
Next, $\uu^{n+1}$ and $\Sig^{n+1}$ are solutions of the system
\begin{equation} \label{Momentum}
\begin{cases}
\begin{aligned} \displaystyle
& \frac{1}{\dt} \Big [ \frac{1}{2}(\rho^{n+1} + \rho^n) \uu^{n+1}  - \rho ^n \uu^n  \Big ] + \tilde{B}(\rho \enn \uuu \en , \uu \enn) \\
& \phantom{\frac{1}{\dt} \Big [ \frac{1}{2}(\rho^{n+1} + \rho^n)} - \Div ( 2 \mu^{n+1} \DD \uu^{n+1} ) + \boldsymbol{\nabla} ( p^n + q^n ) = \Div( \alpha^{n+1} \Sig^{n+1} ) , \\
&\Sig^{n+1} = \mathbb{P} \big ( \Sig^{n+1} + r \alpha^{n+1} \DD \uu ^{n+1} + \theta (\Sig^{n} - \Sig^{n+1}) \big ),\\
&\uu^{n+1}|_{\Gamma} =0.
\end{aligned} 
\end{cases}
\end{equation}
Finally, we compute $q^{n+1}$ by solving the Poisson equation
\begin{equation} \label{Pression}
\begin{cases}
\Delta q^{n+1} = \displaystyle \frac{\rho_{1}}{\dt} \Div (\uu^{n+1}),\\
\partial _{\nn} q^{n+1} |_{\Gamma} =0,
\end{cases}
\end{equation}
and we write
\begin{align}
& p^{n+1} = p^n + q^{n+1}, \label{IncPression} \\
& \uuu^{n+1} = \uu \enn - \frac{\dt}{\rho_{1}} \boldsymbol{\nabla} q \enn. \label{UProj}
\end{align}
\begin{rem} \label{RemUChap}
Using \eqref{Pression}, \eqref{IncPression} and \eqref{UProj}, we observe that, for all $0 \leq n \leq N,$ $ \uuu \en $ satisfies
\begin{equation*}
   \uuu \en \cdot \nn |_{\Gamma} =0 \quad \text{and} \quad  \Div \uuu \en =0.
\end{equation*}
\end{rem}
Note that a fractional time-stepping method in the spirit of \cite{GuermondSalgado4}, \cite{GuermondSalgado5} is employed to decouple the velocity and the pressure. A major difference between \cite{GuermondSalgado4}, \cite{GuermondSalgado5} and the above algorithm \eqref{PhaseEquation}--\eqref{UProj} resides in the treatment of the mass conservation equation, namely the divergence-free velocity $\uuu^{n}$ is used as the convective velocity in the equation of the density~\eqref{PhaseEquation}. With this approach, lower and upper bounds on the
density $(\rho^n)_{n\ge 0}$ can be derived. More precisely, we will prove that,
for all $0 \leq n \leq N$,
\begin{equation*}
\rho_1 \leq \rho \en \leq \rho_2.
\end{equation*}
In order to resolve the coupling in~\eqref{Momentum} between the velocity~$\uu \enn$ and the plastic tensor~$\Sig \enn$, we advocate a fixed-point algorithm, in the spirit of \cite{Muravleva}, which is 
detailed in Section 4.3. As in~\cite{ChupinDubois}, a pseudo-time relaxation term $\theta(\Sig^{n}-\Sig^{n+1})$ is added in the projection operator defining~$\Sig^{n+1}$ in~\eqref{Momentum}. This additional
term guarantees a geometric convergence of this algorithm. \\
System~\eqref{Pression} is a classical Poisson equation. As an immediate consequence, we have $q\enn\in H^2(\Omega)$ and so $\uuu \enn \in \Haach1$ exist and are uniquely defined.
Finally, let us mention that the coefficient $ r \alpha \enn$ is used, instead of $r$ only, in the Bingham projection defining~$\Sig\enn$. We will see in Section 4.3 that the use of $r \alpha \enn $
is required to derive proper estimates and convergence of the fixed-point algorithm.

\subsection{Discrete maximum principle on the density}
The use of the divergence free velocity in the mass conservation equation \eqref{PhaseEquation} allows us to derive positive upper and lower bounds on the density.
This property will be repeatedly used in the rest of the paper. Let us first establish this result.\\
Let $\epsilon >0 $ be a parameter. In order to properly define solutions of \eqref{PhaseEquation}, we introduce the following regularized form
\begin{equation} \label{Viscosity}
\begin{cases} \displaystyle
\frac{\rho^{n+1}_{\epsilon} - \rho^n}{\dt} + \uuu^n \cdot \boldsymbol{\nabla} \rho^{n+1}_{\epsilon} - \epsilon \Delta \rho \enn _{\epsilon} =0, \\
\boldsymbol{\nabla} \rho \enn _{\epsilon} \cdot \nn   |_{\Gamma} =0.
\end{cases}
\end{equation}
The limit of $\rho^{n+1}_{\epsilon}$ when $\epsilon$ tends to $0$ is called the viscosity solution of \eqref{PhaseEquation} (see~\cite{Evans1980} and \cite{GrandallLions1983}). In the remaining part of the paper,
$\rho\enn $ will always be used to denote the viscosity solution of \eqref{PhaseEquation}.
\begin{proposition} \label{prop:StabImpliciteRho}
If $ \hat{\uu}^n \in \boldsymbol{\mathcal{H}} \cap \mathbf{L}^3(\Omega)$ and $\rho_1 \leq \rho \en \leq \rho_2 \textrm{ almost everywhere in } \Omega$, then there exists a unique viscosity solution satisfying \eqref{PhaseEquation} in the distribution sense.
Moreover, this solution satisfies
\begin{equation*}
  \rho_1 \leq \rho \enn \leq \rho_2,\text{ \textit{a.e.} in } \Omega .
\end{equation*}  
\end{proposition}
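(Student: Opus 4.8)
The plan is to prove the statement in three stages: first establish well-posedness and uniform bounds for the regularized problem~\eqref{Viscosity} for each fixed $\epsilon>0$, then pass to the limit $\epsilon\to 0$, and finally confirm the limiting $\rho\enn$ solves~\eqref{PhaseEquation} in the distributional sense. For the regularized problem, I would first rewrite~\eqref{Viscosity} as the stationary elliptic equation
\begin{equation*}
  \rho\enn_\epsilon - \dt\,\epsilon\,\Delta\rho\enn_\epsilon + \dt\,\uuu^n\cdot\boldsymbol{\nabla}\rho\enn_\epsilon = \rho^n,
  \quad \boldsymbol{\nabla}\rho\enn_\epsilon\cdot\nn|_\Gamma = 0.
\end{equation*}
Because $\uuu^n\in\boldsymbol{\mathcal{H}}\cap\mathbf{L}^3(\Omega)$ is divergence-free with vanishing normal trace, the bilinear form
$a(\varphi,\psi) = (\varphi,\psi) + \dt\,\epsilon\,(\boldsymbol{\nabla}\varphi,\boldsymbol{\nabla}\psi) + \dt\,(\uuu^n\cdot\boldsymbol{\nabla}\varphi,\psi)$
on $H^1(\Omega)$ is continuous (the convection term is controlled via H\"older with the $\mathbf{L}^3$ bound on $\uuu^n$ and the Sobolev embedding $H^1(\Omega)\hookrightarrow L^6(\Omega)$) and coercive: the convection term contributes zero on the diagonal by the orthogonality property~\eqref{CalculBaseStability0}, so $a(\varphi,\varphi)\ge\|\varphi\|^2 + \dt\,\epsilon\,\|\boldsymbol{\nabla}\varphi\|^2$. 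Lax--Milgram then yields a unique $\rho\enn_\epsilon\in H^1(\Omega)$.

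The heart of the argument is the maximum principle for $\rho\enn_\epsilon$. To get the upper bound I would test the equation with the truncation $(\rho\enn_\epsilon-\rho_2)^+\in H^1(\Omega)$. The convection term again vanishes by~\eqref{CalculBaseStability0} (applied with $\varphi=(\rho\enn_\epsilon-\rho_2)^+$, after noting $\uuu^n\cdot\boldsymbol{\nabla}\rho\enn_\epsilon\cdot(\rho\enn_\epsilon-\rho_2)^+ = \uuu^n\cdot\boldsymbol{\nabla}\varphi\cdot\varphi$ on the set where the truncation is active), the diffusion term gives $\dt\,\epsilon\,\|\boldsymbol{\nabla}(\rho\enn_\epsilon-\rho_2)^+\|^2\ge 0$, and the right-hand side contributes $(\rho^n-\rho_2,(\rho\enn_\epsilon-\rho_2)^+)\le 0$ since $\rho^n\le\rho_2$ a.e. What remains is $\|(\rho\enn_\epsilon-\rho_2)^+\|^2\le 0$, forcing $\rho\enn_\epsilon\le\rho_2$ a.e.\ in $\Omega$. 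The lower bound $\rho\enn_\epsilon\ge\rho_1$ follows symmetrically by testing with $-(\rho_1-\rho\enn_\epsilon)^+$ and using $\rho^n\ge\rho_1$. These bounds are uniform in $\epsilon$.

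Finally, I would pass to the limit. From the uniform $L^\infty$ bound, $(\rho\enn_\epsilon)_\epsilon$ is bounded in $L^2(\Omega)$, hence has a weakly convergent subsequence with some limit $\rho\enn$ still satisfying $\rho_1\le\rho\enn\le\rho_2$ a.e.\ (the bounds are preserved under weak limits since the constraint set is closed and convex). To pass to the limit in the weak formulation one tests~\eqref{Viscosity} against a fixed $\varphi\in C^\infty(\overline\Omega)$: the term $\dt\,\epsilon\,(\boldsymbol{\nabla}\rho\enn_\epsilon,\boldsymbol{\nabla}\varphi) = -\dt\,\epsilon\,(\rho\enn_\epsilon,\Delta\varphi)$ tends to zero by the $L^2$ bound, and the remaining terms are linear in $\rho\enn_\epsilon$ and pass to the limit by weak convergence, yielding that $\rho\enn$ satisfies~\eqref{PhaseEquation} in the distributional sense. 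Uniqueness of this viscosity solution follows because the limiting transport equation, tested with the difference of two solutions, is again controlled by~\eqref{CalculBaseStability0}; alternatively one invokes the uniqueness from~\cite{Evans1980,GrandallLions1983}. The main obstacle, and the point requiring care, is the rigorous justification that the convection term drops out when testing with the truncations $(\rho\enn_\epsilon-\rho_2)^+$ and $(\rho_1-\rho\enn_\epsilon)^+$ — this needs the chain rule for Sobolev truncations together with~\eqref{CalculBaseStability0}, and a density argument since~\eqref{CalculBaseStability0} is stated for $\varphi\in H^1(\Omega)$ while one must check the truncation indeed lies in $H^1(\Omega)$ with the expected gradient.
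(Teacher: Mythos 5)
Your proposal follows essentially the same route as the paper's proof: Lax--Milgram for the regularized problem using the orthogonality property \eqref{CalculBaseStability0}, Stampacchia-type truncation tests $(\rho\enn_\epsilon-\rho_2)^+$ and the negative-part analogue for the uniform bounds, and a vanishing-viscosity passage to the limit in the distributional formulation. The only cosmetic difference is that you kill the $\epsilon$-diffusion term by moving derivatives onto the test function (take $\varphi\in\mathcal{D}(\Omega)$ to avoid the boundary term) instead of using the uniform bound on $\sqrt{\epsilon}\,\boldsymbol{\nabla}\rho\enn_\epsilon$ as the paper does, which is equally valid.
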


\begin{proof} \textit{Step 1: Regularized equation.} Equation \eqref{Viscosity} leads us to consider the variational problem
\begin{equation} \label{FV}
  \begin{aligned}
    & \textrm{Find}\, \rho^{n+1}_{\epsilon} \in\Hach1 \,\textrm{so that, for all }\, \varphi \in \Hach1, \,\textrm{we have} \\
    & \io \rho^{n+1}_{\epsilon} \varphi - \dt \io \rho^{n+1}_{\epsilon} \uuu^n \cdot \boldsymbol{\nabla} \varphi + \dt \epsilon \io \boldsymbol{\nabla} \rho^{n+1}_{\epsilon}  \cdot \boldsymbol{\nabla} \varphi = \io \rho^n \varphi.
  \end{aligned}
\end{equation}
By virtue of the Lax-Milgram theorem (using $\hat{\uu}^n \in \boldsymbol{\mathcal{H}} \cap \mathbf{L}^3(\Omega)$ and the property \eqref{CalculBaseStability0}), there exists a unique
$\rho^{n+1}_{\epsilon} \in~\Hach1 $ solution of \eqref{FV}, which satisfies \eqref{Viscosity} almost everywhere in $\Omega$ and $\Gamma$.  \\

\textit{Step 2: Bound of the regularized solution.} 
We want to prove that $\rho_1 \leq \rho_\epsilon \enn \leq~\rho_2$ almost everywhere in $\Omega$. By choosing $\varphi = (\rho \enn _{\epsilon} - \rho_2 )^+$ in \eqref{FV},
where the $+$ superscript denotes the positive part (similarly a $-$ superscript denotes the negative part), using again \eqref{CalculBaseStability0}, and defining the
$\text{sign}$ function by $\sign(x)=1$ if $x \geq 0$ and $\sign(x)=-1$ otherwise, we obtain
\begin{align*} 
  \io (\rho^{n+1}_{\epsilon} - \rho_2 ) (\rho \enn _{\epsilon} - \rho_2 )^+ + \frac{\dt \epsilon}{2} \io \vert \boldsymbol{\nabla} \rho^{n+1}_{\epsilon} \vert ^2   \big ( 1 + & \sign(\rho \enn _{\epsilon} - \rho_2 ) \big ) \\
  \phantom{\io (\rho^{n+1}_{\epsilon} - \Vert \rho \en \Vert_{\Linf}) (\rho \enn _{\epsilon} }= & \io ( \rho^n - \rho_2) (\rho \enn _{\epsilon} - \rho_2 )^+ .
\end{align*}
Observing that the right-hand side is negative and the second term of the left-hand side is non-negative, we have
\begin{equation*}
  \io (\rho^{n+1}_{\epsilon} - \rho_2) (\rho \enn _{\epsilon} - \rho_2 )^+ \leq 0 .
\end{equation*}
On the other hand, $x x^+$ is non-negative for any $x \in \R $. As a consequence,
\begin{equation*}
  (\rho^{n+1}_{\epsilon} - \rho_2) (\rho \enn _{\epsilon} - \rho_2)^+ = 0 \quad \text{\textit{a.e.} in } \Omega,
\end{equation*}
and then
\begin{equation*}
  \rho^{n+1}_{\epsilon} \leq \rho_2 \quad \textrm{\textit{a.e.} in } \Omega.
\end{equation*}
By choosing $(\rho^{n+1}_{\epsilon} - \rho_1 )^-$ in \eqref{FV}, we similarly obtain $\rho^{n+1}_{\epsilon} \geq \rho_1 $ \textit{a.e.} in $\Omega.$
Finally, $\rho^{n+1}_{\epsilon} \in \Linf $ and
\begin{equation} \label{MaxPhiEpsilon}
  \rho_1 \leq \rho _\varepsilon \enn \leq \rho_2,\quad \text{\textit{a.e.} in } \Omega .
\end{equation}

\textit{Step 3: Passage to the limit on $ \epsilon $ and viscosity solution.} By choosing $ \varphi = \rho^{n+1}_{\epsilon} $ in~\eqref{FV}, we have 
\begin{equation*}
\io \vert \rho^{n+1}_{\epsilon} \vert ^2 + \dt \epsilon \io \vert \boldsymbol{\nabla} \rho^{n+1}_{\epsilon}  \vert ^2 = \io \rho^n \rho^{n+1}_{\epsilon} .
\end{equation*}
By invoking the Cauchy-Schwarz inequality, it follows
\begin{equation*}
\Vert \rho^{n+1}_{\epsilon} \Vert ^2 + \dt \Vert \sqrt{\epsilon} \boldsymbol{\nabla} \rho^{n+1}_{\epsilon}  \Vert ^2 \leq \Vert \rho^n \Vert \Vert \rho^{n+1}_{\epsilon} \Vert.
\end{equation*}
Then, with the help of Young's inequality, we obtain
\begin{equation*}
  \Vert \rho^{n+1}_{\epsilon} \Vert ^2 + 2 \dt \Vert \sqrt{\epsilon} \boldsymbol{\nabla} \rho^{n+1}_{\epsilon} \Vert ^2 \leq \Vert \rho^n \Vert ^2.
\end{equation*}
By using this last inequality and \eqref{MaxPhiEpsilon}, we deduce that $( \rho^{n+1}_{\epsilon})_{\epsilon}$, and $(\sqrt{\epsilon} \boldsymbol{\nabla} \rho^{n+1}_{\epsilon})_{\epsilon}$, are bounded sequences
in respectively $\Linf$ and $\Ll2$. Hence, there exists weakly convergent subsequences
\begin{equation} \label{CVFBrezis}
 \rho^{n+1}_{\epsilon} \overset{\ast}{\underset{\epsilon \to 0}{\rightharpoonup} } \rho^{n+1} \quad \text{in} \quad \Linf, \qquad
 \sqrt{\epsilon} \boldsymbol{\nabla} \rho^{n+1}_{\epsilon} \underset{\epsilon \to 0}{\rightharpoonup}  \psi \quad  \text{in} \quad \Ll2.
\end{equation}
As a consequence of the weak convergence and by using \eqref{MaxPhiEpsilon},
\begin{equation*}
  \rho_1 \leq \rho \enn \leq \rho_2,\quad \text{\textit{a.e.} in } \Omega .
\end{equation*}
Finally, for all $\varphi \in \mathcal{D}(\Omega)$, we have
\begin{align*}
\io \rho^{n+1}_{\epsilon} \varphi \quad & \underset{\epsilon \to 0}{\longrightarrow} \quad \io \rho^{n+1} \varphi, \\
- \dt \io \rho^{n+1}_{\epsilon} \uuu^n \cdot \boldsymbol{\nabla} \varphi  \quad & \underset{\epsilon \to 0}{\longrightarrow} \quad - \dt \io \rho^{n+1} \uuu^n \cdot\boldsymbol{\nabla}\varphi , \\
 \dt \epsilon \io \boldsymbol{\nabla} \rho^{n+1}_{\epsilon}  \cdot \boldsymbol{\nabla} \varphi \quad & \underset{\epsilon \to 0}{\longrightarrow} \quad 0,
\end{align*}
so that
\begin{equation*}
  \io \rho^{n+1} \varphi - \dt \io \rho^{n+1} \uuu^n \cdot \boldsymbol{\nabla}\varphi = \io \rho^n \varphi.
\end{equation*}
We conclude that $\rho\enn$ satisfies the transport equation \eqref{PhaseEquation} in the distributions sense.
\end{proof}

We are now able to establish the following result:

\begin{theorem} \label{thm:StabilityPhi}
If $\uu _0 $ and $\rho_0$ satisfy \eqref{CondIniBoundModCont2}, the sequences $(\uuu \en , \rho \en)_{0 \leq n \leq N} $ solutions of \eqref{PhaseEquation}-\eqref{UProj} satisfy,
for all $n$ such that $0\leq n \leq N$,
\begin{equation*}
  \uuu \en \in \boldsymbol{\mathcal{H}} \cap \Haach1,
\end{equation*}
and
\begin{equation*}
\rho_1 \leq \rho \en \leq \rho_2, \quad \text{\textit{a.e.} in }  \Omega.
\end{equation*}
\end{theorem}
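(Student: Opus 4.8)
The plan is to argue by induction on $n$, following the four sub-steps~\eqref{PhaseEquation}--\eqref{UProj} of the scheme in order. For $n=0$ the two assertions are exactly the assumptions~\eqref{CondIniBoundModCont2}: $\uuu^0=\uu_0\in\Hh10$ with $\Div\uu_0=0$ gives $\uuu^0\in\boldsymbol{\mathcal{H}}\cap\Haach1$, and $\rho^0=\rho_0$ satisfies $\rho_1\le\rho_0\le\rho_2$ a.e. in $\Omega$. So assume the statement holds at level $n$; I propagate it to level $n+1$.

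First the density. Since $\uuu^n\in\Haach1$, the Sobolev embedding $\Haach1\hookrightarrow\mathbf{L}^6(\Omega)\hookrightarrow\mathbf{L}^3(\Omega)$ in dimension three gives $\uuu^n\in\mathbf{L}^3(\Omega)$, while the induction hypothesis gives $\uuu^n\in\boldsymbol{\mathcal{H}}$ and $\rho_1\le\rho^n\le\rho_2$. Thus Proposition~\ref{prop:StabImpliciteRho} applies to~\eqref{PhaseEquation} and produces the (viscosity) solution $\rho^{n+1}$ with $\rho_1\le\rho^{n+1}\le\rho_2$ a.e. in $\Omega$; this is the second assertion at level $n+1$. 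As a byproduct, $\mu^{n+1}=\mu(\rho^{n+1})$ and $\alpha^{n+1}=\alpha(\rho^{n+1})$ from~\eqref{PhaseEquation2} are well defined and obey~\eqref{VarCoeff}, in particular $0<\mu_1\le\mu^{n+1}\le\mu_2$ and $0\le\alpha_1\le\alpha^{n+1}\le\alpha_2$.

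The main obstacle is the next sub-step: showing that the coupled, nonlinear system~\eqref{Momentum} admits a solution $(\uu^{n+1},\Sig^{n+1})$ with $\uu^{n+1}\in\Hh10$ and $\Sig^{n+1}\in\boldsymbol{\Lambda}$. The momentum line is an elliptic problem for $\uu^{n+1}$ whose right-hand side $\Div(\alpha^{n+1}\Sig^{n+1})$ is coupled, through the nonexpansive (hence Lipschitz) projection $\mathbb{P}$ onto $\Lambda$, to $\DD\uu^{n+1}$ in the Bingham line. The ingredients I would use are: coercivity of $-\Div(2\mu^{n+1}\DD\,\cdot\,)$ on $\Hh10$ via Korn's inequality together with $\mu^{n+1}\ge\mu_1>0$; the skew-symmetry of the convective term $\tilde B(\rho^{n+1}\uuu^n,\uu^{n+1})$ when tested against $\uu^{n+1}$, which rests on $\Div\uuu^n=0$ and $\uuu^n\cdot\nn|_\Gamma=0$ (induction hypothesis and Remark~\ref{RemUChap}) through identities of the type~\eqref{CalculBaseStability}--\eqref{CalculBaseStability0}, and on $\rho^{n+1}\ge\rho_1>0$ to sign the mass term $\tfrac{1}{2\dt}(\rho^{n+1}+\rho^n)\uu^{n+1}$; and the nonexpansiveness of $\mathbb{P}$, which combined with the weight $r\alpha^{n+1}$ appearing in its argument closes a fixed-point (equivalently, a variational-inequality/minimization) argument for $(\uu^{n+1},\Sig^{n+1})$. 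This is precisely the fixed-point procedure developed in Section~4.3, which yields existence together with uniqueness of $\uu^{n+1}$ (with $\Sig^{n+1}$ determined up to its value on the rigid region). In particular $\uu^{n+1}\in\Hh10\subset\Haach1$.

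Finally the projection sub-step. Since $\uu^{n+1}\in\Hh10$, we have $\Div\uu^{n+1}\in\L2$ with $\int_\Omega\Div\uu^{n+1}\,d\xx=\int_\Gamma\uu^{n+1}\cdot\nn=0$, so the Neumann problem~\eqref{Pression} is compatible and, by elliptic regularity on the smooth domain $\Omega$, has a unique solution $q^{n+1}\in H^2(\Omega)\cap L^2_0(\Omega)$; hence $\boldsymbol{\nabla}q^{n+1}\in\Haach1$. Therefore $\uuu^{n+1}=\uu^{n+1}-\tfrac{\dt}{\rho_1}\boldsymbol{\nabla}q^{n+1}$ from~\eqref{UProj} belongs to $\Haach1$, and Remark~\ref{RemUChap} — an immediate computation from~\eqref{Pression}, \eqref{IncPression}, \eqref{UProj} — gives $\Div\uuu^{n+1}=0$ and $\uuu^{n+1}\cdot\nn|_\Gamma=0$, i.e. $\uuu^{n+1}\in\boldsymbol{\mathcal{H}}$. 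Both assertions hold at level $n+1$, which closes the induction.
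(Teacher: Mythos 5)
Your induction has the same skeleton as the paper's proof (initialization from \eqref{CondIniBoundModCont2}, Proposition~\ref{prop:StabImpliciteRho} for the density bounds, existence for \eqref{Momentum}, then $H^2$ regularity for \eqref{Pression} and Remark~\ref{RemUChap} for $\uuu\enn\in\boldsymbol{\mathcal{H}}\cap\Haach1$), but the step where you justify existence of $(\uu\enn,\Sig\enn)$ is the one genuinely delicate point, and your justification as written does not hold up. You appeal to ``the fixed-point procedure developed in Section~4.3''; however, Theorem~\ref{thm:PtFixeGaugeUzawa} is a \emph{convergence} statement whose proof starts by subtracting \eqref{FixPt1}--\eqref{FixPt2} from \eqref{Momentum}, i.e.\ it presupposes that the solution $(\uu\enn,\Sig\enn)$ already exists — so using it here is circular. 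One could of course turn the iteration into a genuine Banach fixed-point argument for the map $\Sig\mapsto\mathbb{P}\bigl(\Sig+r\alpha\enn\DD\uu[\Sig]+\theta(\Sig\en-\Sig)\bigr)$ on $\boldsymbol{\Lambda}$, but the contraction constant is exactly the one of Theorem~\ref{thm:PtFixeGaugeUzawa} and requires the parameter restriction $4\theta+r\alpha_2^2/\mu_1\le 4$, which Theorem~\ref{thm:StabilityPhi} does not assume (it holds for all $r>0$, $\theta>0$). The paper avoids this by invoking the theory of variational inequalities (the result quoted from \cite{OverviewBingham}, p.~47), which gives existence and uniqueness of $(\uu\enn,\Sig\enn)\in\Hh10\times\boldsymbol{\Lambda}$ unconditionally; your parenthetical ``equivalently, a variational-inequality/minimization argument'' is the correct route, but it is the route you need to actually take, not the fixed-point one.

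Two smaller remarks. First, your observation that $\uuu\en\in\mathbf{L}^3(\Omega)$ via the Sobolev embedding is a useful explicit check of the hypothesis of Proposition~\ref{prop:StabImpliciteRho} that the paper leaves implicit. Second, your aside that $\Sig\enn$ is ``determined up to its value on the rigid region'' is not accurate for the relaxed projection in \eqref{Momentum}: where $\DD\uu\enn=0$ the relation reads $\Sig\enn=\mathbb{P}\bigl((1-\theta)\Sig\enn+\theta\Sig\en\bigr)$, and since the argument lies in the convex set this forces $\Sig\enn=\Sig\en$ there, so $\Sig\enn$ is in fact unique; this does not affect the statement being proved, but it is worth getting right since uniqueness of the pair is what the paper asserts.
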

\begin{proof}
We proceed by induction.\\
\textit{Case $n=0$.} From \eqref{CondIniBoundModCont2}, using $ \rho^0 = \rho_0$ and $ \uuu ^0 = \uu _0 $, we infer 
\begin{equation*}
 \rho_{1} \leq \rho ^{0} \leq \rho_{2}, \quad \text{\textit{a.e.} in }  \Omega, \quad \text{and} \quad \uuu ^0 \in \boldsymbol{\mathcal{H}} \cap \Haach1. 
\end{equation*}
\textit{Case n $\geq$ 1.} We assume
\begin{equation*}
\rho_1 \leq \rho ^n \leq \rho_2,\textrm{ \textit{a.e.} in } \Omega, \quad \text{and} \quad \uuu \en \in  \boldsymbol{\mathcal{H}} \cap \Haach1.
\end{equation*}
From Proposition~\ref{prop:StabImpliciteRho}, we have $\rho_1 \leq \rho \enn \leq \rho_2$ almost everywhere.
The existence and uniqueness for the solution $(\uu \enn , \Sig \enn ) \in  \Hh10 \times \boldsymbol{\Lambda} $ to the system \eqref{Momentum} is a consequence of the theory of variational inequalities.
The precise result for this system can be found in \cite{OverviewBingham} (page 47).
Then, by solving \eqref{Pression}, we obtain the existence and uniqueness of $q \enn \in H^2(\Omega)$. As a consequence, $\uuu \enn$ given by \eqref{UProj} lies
in $\boldsymbol{\mathcal{H}} \cap \Haach1 $, which concludes the proof.
\end{proof} 

\begin{rem} \label{Rem:Bounds_Visc_Yield}
Using~\eqref{VarCoeff} and the above result, we observe that, for all $0 \leq n \leq N$,
\begin{equation*}
   \mu _1 \leq \mu\en \leq \mu _2 , \quad \alpha_1 \leq \alpha\en \leq \alpha_2 , \quad \textit{a.e.} \text{ in } \Omega.
\end{equation*}
\end{rem}

\subsection{Practical implementation of the Bingham projection}
The equations~\eqref{Momentum} involve a coupling between $\uu^{n+1}$ and $\Sig^{n+1}$. In order to solve this system in practice and following~\cite{ChupinDubois}, we employ a fixed-point iteration procedure and
proceed as it follows. Let us start with $\Sig^{n,0}= \Sig ^n$. For $k\geq 0$ we assume that $\Sig^{n,k} \in \boldsymbol{\Lambda} $ is known and we compute $\uu ^{n,k} \in \Hh10$ by solving the following elliptic problem
\begin{equation} \label{FixPt1}
  \begin{cases}
    \begin{aligned} \displaystyle
& \frac{1}{\dt} \Big [ \frac{1}{2}(\rho^{n+1} + \rho^n) \uu^{n,k} - \rho ^n \uu^n  \Big ] + \tilde{B}( \rho^{n+1} \uuu^n , \uu^{n,k} )\\
& \phantom{\frac{1}{\dt} \Big [ \frac{1}{2}(\rho^{n+1}}- \Div ( 2 \mu^{n+1} \DD \uu^{n,k} ) + \boldsymbol{\nabla}( p^n + q^n ) = \Div( \alpha^{n+1} \Sig^{n,k} ) , \\
    & \uu^{n,k}|_{\Gamma} =0.
    \end{aligned}
  \end{cases}
\end{equation}
Next, a projection is used to explicitly deduce the extra-stress tensor $\Sig^{n,k+1} \in \boldsymbol{\Lambda}$, namely
\begin{equation} \label{FixPt2}
\Sig^{n,k+1} = \mathbb{P} \big ( \Sig^{n,k} + r \alpha^{n+1} \DD \uu ^{n,k} + \theta (\Sig^{n} - \Sig^{n,k}) \big ).
\end{equation}
The following result establishes that the sequence $( \uu ^{n,k}, \Sig ^{n,k})_k $ converges to the desired solution $(\uu ^{n+1},\Sig^{n+1})$ of the equation \eqref{Momentum}. 
\begin{theorem} \label{thm:PtFixeGaugeUzawa}
If $ 4 \theta + r \frac{(\alpha_2)^2}{\mu_1} \leq 4$, then, for all $0 \leq n \leq N-1,$ the sequence $(\uu ^{n,k},\Sig^{n,k})_k$ tends to $(\uu ^{n+1},\Sig^{n+1})$ when $k$ tends to infinity. Moreover the convergence is geometric with common ratio $1- \theta$.
\end{theorem}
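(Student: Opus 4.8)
The plan is to estimate directly the distance from the $k$-th iterate $(\uu^{n,k},\Sig^{n,k})$ to the solution $(\uu^{n+1},\Sig^{n+1})$ of \eqref{Momentum}, which exists and is unique by Theorem~\ref{thm:StabilityPhi} and which satisfies, besides the momentum equation of \eqref{Momentum} tested against $\Hh10$, the pointwise identity $\Sig^{n+1}=\mathbb{P}\big(\Sig^{n+1}+r\alpha^{n+1}\DD\uu^{n+1}+\theta(\Sig^{n}-\Sig^{n+1})\big)$. For a fixed $n$, set $\ee^k=\uu^{n,k}-\uu^{n+1}\in\Hh10$ and $\eps^k=\Sig^{n,k}-\Sig^{n+1}$. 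First I would check that the scheme is well posed at each step: \eqref{FixPt1} is a linear elliptic problem whose bilinear form is continuous and coercive on $\Hh10$ — coercivity coming from the zeroth order term, since $\tfrac12(\rho^{n+1}+\rho^n)\ge\rho_1>0$, and from the viscous term, since $\mu^{n+1}\ge\mu_1>0$ (Remark~\ref{Rem:Bounds_Visc_Yield}), together with a Korn inequality, while the convective term $\tilde{b}(\rho^{n+1}\uuu^n,\cdot,\cdot)$ vanishes on the diagonal by \eqref{CalculBaseStability} (using $\rho^{n+1}\uuu^n\cdot\nn|_\Gamma=0$, see Remark~\ref{RemUChap}) — so Lax--Milgram provides a unique $\uu^{n,k}\in\Hh10$, and then \eqref{FixPt2} yields $\Sig^{n,k+1}\in\boldsymbol{\Lambda}$.

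Subtracting the momentum equation of \eqref{Momentum} from \eqref{FixPt1} (both being affine in the unknowns with the same linear part, the only difference being $\Sig^{n,k}$ versus $\Sig^{n+1}$) and testing against $\ee^k$, the convective term drops by \eqref{CalculBaseStability} and one obtains the energy identity
\[
\frac{1}{2\dt}\io(\rho^{n+1}+\rho^n)\,|\ee^k|^2+2\io\mu^{n+1}\,\DD\ee^k:\DD\ee^k=-\io\alpha^{n+1}\,\eps^k:\DD\ee^k .
\]
Next, since $\mathbb{P}$ is the projection onto a closed convex set it is $1$-Lipschitz for the norm $|\cdot|$, so subtracting the two relations defining $\Sig^{n,k+1}$ and $\Sig^{n+1}$ gives, a.e. in $\Omega$,
\[
|\eps^{k+1}|\le\big|(1-\theta)\eps^k+r\alpha^{n+1}\DD\ee^k\big| .
\]

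The hypothesis forces $\theta\le1$, hence $1-\theta\in[0,1)$. Squaring the last inequality (the squared norm coming from the inner product $\tfrac12(\cdot):(\cdot)$), integrating over $\Omega$, and substituting the energy identity into the cross term $(1-\theta)r\io\alpha^{n+1}\eps^k:\DD\ee^k$, I would drop the resulting nonnegative time term and use $\DD\ee^k:\DD\ee^k=2|\DD\ee^k|^2$ together with $(\alpha^{n+1})^2\le(\alpha_2)^2\le\tfrac{(\alpha_2)^2}{\mu_1}\mu^{n+1}$ to reach
\[
\io|\eps^{k+1}|^2\le(1-\theta)^2\io|\eps^k|^2+r\Big(r\,\tfrac{(\alpha_2)^2}{\mu_1}-4(1-\theta)\Big)\io\mu^{n+1}\,|\DD\ee^k|^2 .
\]
The assumption $4\theta+r(\alpha_2)^2/\mu_1\le4$ is precisely what makes the last coefficient nonpositive, whence $\Vert\eps^{k+1}\Vert\le(1-\theta)\Vert\eps^k\Vert$; since $\Sig^{n,0}=\Sig^{n}$, this gives $\Vert\Sig^{n,k}-\Sig^{n+1}\Vert\le(1-\theta)^k\,\Vert\Sig^{n}-\Sig^{n+1}\Vert$, i.e. geometric decay with ratio $1-\theta$.

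For the velocity I would return to the energy identity, discard the nonnegative time term, bound $\mu^{n+1}\ge\mu_1$ on the left and apply Cauchy--Schwarz on the right ($\io\alpha^{n+1}\eps^k:\DD\ee^k\le 2\alpha_2\Vert\eps^k\Vert\,\Vert\DD\ee^k\Vert$), obtaining $\Vert\DD\ee^k\Vert\le\tfrac{\alpha_2}{2\mu_1}\Vert\eps^k\Vert$; a Korn inequality then yields $\Vert\uu^{n,k}-\uu^{n+1}\Vert_1\le C\,\Vert\Sig^{n,k}-\Sig^{n+1}\Vert$, which also tends to $0$ like $(1-\theta)^k$. The one genuinely delicate point is in the projection step: the cross term $(1-\theta)r\io\alpha^{n+1}\eps^k:\DD\ee^k$ must not be handled by a Young inequality — this would only produce a condition of the type $c\,r\,\alpha_2^2/\mu_1+\theta<1$ with a suboptimal constant and no exact ratio — but instead by feeding in the \emph{exact} velocity energy identity, so that the viscous dissipation it carries is absorbed, to the last drop, by the quadratic term $r^2(\alpha^{n+1})^2|\DD\ee^k|^2$ exactly at the threshold $4\theta+r(\alpha_2)^2/\mu_1=4$. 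The remaining ingredients — Lax--Milgram, Korn's inequality, non-expansiveness of $\mathbb{P}$ — are standard.
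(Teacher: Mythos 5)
Your proof is correct and follows essentially the same route as the paper: the same error system for $(\uu^{n,k}-\uu\enn,\Sig^{n,k}-\Sig\enn)$, the convective term killed via \eqref{CalculBaseStability}, the non-expansiveness of $\mathbb{P}$, and, crucially, the substitution of the exact velocity energy identity into the cross term (rather than Young's inequality), which yields $\Vert\Sig^{n,k}-\Sig\enn\Vert\le(1-\theta)^k\Vert\Sig^{n}-\Sig\enn\Vert$ exactly under $4\theta+r\alpha_2^2/\mu_1\le 4$. The only (harmless) variation is at the end: the paper keeps the time term in the combined estimate to get the $\mathbf{L}^2$ velocity bound (and states the $\mathbf{H}^1$ bound only under the strict inequality), whereas you extract $\Vert\DD(\uu^{n,k}-\uu\enn)\Vert\le\tfrac{\alpha_2}{2\mu_1}\Vert\Sig^{n,k}-\Sig\enn\Vert$ directly from the energy identity by Cauchy--Schwarz, which gives the same geometric ratio and works even at the threshold.
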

\begin{proof}
We denote $ \overline{\uu}^k = \uu^{n,k} - \uu^{n+1}$ and $ \overline{\Sig}^k = \Sig^{n,k} - \Sig^{n+1}$.
By subtracting \eqref{FixPt1}-\eqref{FixPt2} to \eqref{Momentum}, we obtain
\begin{equation} \label{dem31}
\begin{cases}
\begin{aligned} \displaystyle
& \frac{1}{2 \dt} (\rho^{n+1} + \rho^n) \overline{\uu}^k + \tilde{B}( \rho^{n+1} \uuu^n , \overline{\uu}^k ) - \Div ( 2 \mu^{n+1} \DD \overline{\uu}^k ) = \Div( \alpha^{n+1} \overline{\Sig}^k ) , \\[0.15cm]
& \overline{\uu}^k |_{\Gamma} =0 ,\\[0.15cm]
& \overline{\Sig}^{k+1} = \mathbb{P} \big ( \Sig^{n,k}  + r \alpha^{n+1}  \DD \uu ^{n,k} + \theta (\Sig^{n} - \Sig^{n,k}) \big ) \\
& \phantom{\overline{\Sig}^{k+1} = \mathbb{P} \big ( \Sig^{n,k}  + r \alpha^{n+1}} -  \mathbb{P} \big ( \Sig^{n+1} + r \alpha^{n+1} \DD \uu ^{n+1} + \theta (\Sig^{n} - \Sig^{n+1}) \big ).
\end{aligned}
\end{cases}
\end{equation}
We now take the inner product of the first equation in \eqref{dem31} with $\overline{\uu}^k$ in $\Ll2$. Using \eqref{CalculBaseStability}, we deduce
\begin{equation} \label{dem32}
\frac{1}{\dt} \bigg \Vert \sqrt{\frac{\rho^{n+1} + \rho^n}{2}} \overline{\uu}^k \bigg \Vert ^2 + 4 \Vert \sqrt{ \mu^{n+1} } \DD \overline{\uu}^k \Vert ^2 = - ( \alpha^{n+1} \overline{\Sig}^k, \DD \overline{\uu}^k ).
\end{equation}
From the second equation of \eqref{dem31}, since $\mathbb{P}$ is a projection, we derive
\begin{equation*}
\vert \overline{\Sig}^{k+1} \vert \leq \vert  (1- \theta) \overline{\Sig}^k + r \alpha^{n+1} \DD \overline{\uu} ^{k} \vert.
\end{equation*}
By taking the $\L2$-norm, it follows
\begin{equation} \label{dem33}
\Vert \overline{\Sig}^{k+1} \Vert ^2 \leq (1- \theta)^2 \Vert \overline{\Sig}^k \Vert ^2 + r^2 \Vert \alpha^{n+1} \DD \overline{\uu} ^{k} \Vert ^2 + r(1-\theta)( \alpha^{n+1} \overline{\Sig}^k, \DD \overline{\uu}^k ).
\end{equation}
Combining \eqref{dem32}, \eqref{dem33} and making use of $ \alpha \enn \leq \alpha _2$, $\mu \enn \geq \mu _1 $ and $\frac{\rho \enn + \rho \en}{2} \geq \rho_1$, we obtain
\begin{equation*} 
\begin{aligned}
 \Vert \overline{\Sig}^{k+1} \Vert ^2 + \frac{r \rho_1 (1-\theta)}{\dt} \Vert \overline{\uu}^k  \Vert ^2 +  r \big ( 4 \mu_1 (1-\theta) - r  \alpha_2 ^2 \big ) \Vert \DD \overline{\uu}^k \Vert ^2 \leq (1- \theta)^2 \Vert \overline{\Sig}^k \Vert ^2.
\end{aligned}
\end{equation*}
From the assumption $ 4 \theta + r \frac{\alpha_2 ^2}{\mu_1} \leq 4 $, we deduce
\begin{equation*}
 \Vert \overline{\Sig}^{k} \Vert \leq (1- \theta )^k \Vert \overline{\Sig}^{0} \Vert \quad\textrm{and}\quad
 \Vert \overline{\uu}^k  \Vert \leq \sqrt{\frac{\dt (1 - \theta)}{ r \rho_1 }} (1- \theta )^k \Vert \overline{\Sig}^{0} \Vert
\end{equation*}
which concludes the proof of Theorem~\ref{thm:PtFixeGaugeUzawa}.
\end{proof}
Note that, if $ 4 \theta + r \frac{\alpha_2 ^2}{\mu_1} < 4 $, we also have convergence in $\Haach1 $ for the velocity, namely
\begin{equation*}
  \Vert \DD \overline{\uu}^k  \Vert \leq \frac{1- \theta}{\sqrt{r(4 \mu_1 (1- \theta )- r \alpha ^2 _2 )}} (1- \theta )^k \Vert \overline{\Sig}^{0} \Vert.
\end{equation*}

\section{Stability analysis}
In Theorem~\ref{thm:StabilityPhi}, we have proved that the density satisfies a discrete maximum principle which ensures, as it is noted in Remark~\ref{Rem:Bounds_Visc_Yield}, that the plastic viscosity and the yield stress have lower and upper bounds.
By using this preliminary stability result, we can now establish a stability result for all sequences computed with the scheme \eqref{PhaseEquation}-\eqref{UProj}.
\begin{theorem}
If $\boldsymbol{u}_0$ and $\rho_0$ satisfy \eqref{CondIniBoundModCont2}, if $ r \frac{\alpha_2^2}{\mu_1}\le\frac 32$ and $ \theta \leq \frac{1}{2} ,$ then, for all $ 1 \leq n \leq N$, we have
\begin{equation*}\begin{aligned}
\Vert \sqrt{\rho^n} \uu^{n} \Vert ^2 + \dt \sum _{k=0} ^{n-1} & \Vert \sqrt{2 \mu \enn} \DD \uu ^{k+1} \Vert ^2 +  \frac{\dt ^2}{\rho_1} \Vert\boldsymbol{\nabla}p^{n} \Vert ^2 + \frac{2 \theta}{r} \dt \Vert \Sig^{n} \Vert ^2  \\
& \leq \Vert \sqrt{\rho^0} \uu^{0} \Vert ^2 +  \frac{\dt ^2}{\rho_1} \Vert \boldsymbol{\nabla} p^{0} \Vert ^2 + \frac{2 \theta}{r} \dt \Vert \Sig^{0} \Vert ^2.
\end{aligned} 
\end{equation*}
\end{theorem}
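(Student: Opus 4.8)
The plan is to establish the stability estimate by testing the momentum equation in \eqref{Momentum} with $2\dt\,\uu^{n+1}$, handling the projection/Bingham term via an $L^2$ identity derived from \eqref{Momentum}$_2$, and treating the pressure-update terms by the standard projection-method algebra. First I would take the inner product of the first line of \eqref{Momentum} with $2\dt\,\uu^{n+1}$ in $\Ll2$. The convective term $\tilde B(\rho^{n+1}\uuu^n,\uu^{n+1})$ vanishes against $\uu^{n+1}$ by \eqref{CalculBaseStability}, using that $\Div\uuu^n=0$ and $\uuu^n\cdot\nn|_\Gamma=0$ (Remark~\ref{RemUChap}); note one must first check the algebraic identity $\big(\tfrac12(\rho^{n+1}+\rho^n)\uu^{n+1}-\rho^n\uu^n\big)\cdot\uu^{n+1} = \tfrac12\big(\rho^{n+1}|\uu^{n+1}|^2-\rho^n|\uu^n|^2\big)+\tfrac12\rho^n|\uu^{n+1}-\uu^n|^2$, which together with the density transport equation \eqref{PhaseEquation} tested suitably is what makes the kinetic-energy telescoping work. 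The viscous term gives $+2\dt\,\Vert\sqrt{2\mu^{n+1}}\DD\uu^{n+1}\Vert^2$ after integration by parts (boundary term vanishes since $\uu^{n+1}|_\Gamma=0$), and the plastic term contributes $-2\dt\,(\alpha^{n+1}\Sig^{n+1},\DD\uu^{n+1})$.

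Next I would control the plastic term using the projection relation $\Sig^{n+1}=\mathbb P(\Sig^{n+1}+r\alpha^{n+1}\DD\uu^{n+1}+\theta(\Sig^n-\Sig^{n+1}))$. Since $\mathbb P$ is a $1$-Lipschitz projection, one has $\vert\Sig^{n+1}\vert\le\vert(1-\theta)\Sig^{n+1}+\theta\Sig^n+r\alpha^{n+1}\DD\uu^{n+1}\vert$ pointwise; squaring, integrating, and expanding yields an inequality of the form
\begin{equation*}
\Vert\Sig^{n+1}\Vert^2 \le \Vert(1-\theta)\Sig^{n+1}+\theta\Sig^n\Vert^2 + r^2\Vert\alpha^{n+1}\DD\uu^{n+1}\Vert^2 + 2r\big((1-\theta)\Sig^{n+1}+\theta\Sig^n,\alpha^{n+1}\DD\uu^{n+1}\big),
\end{equation*}
which rearranges to bound $-2\dt(\alpha^{n+1}\Sig^{n+1},\DD\uu^{n+1})$ from above by terms involving $\dt\Vert\Sig^n\Vert^2-\dt\Vert\Sig^{n+1}\Vert^2$ (a telescoping piece, with the $\theta$-weight accounting for the $\tfrac{2\theta}{r}\dt\Vert\Sig^n\Vert^2$ on each side), plus $\tfrac{2\theta}{r}\dt\Vert\Sig^n-\Sig^{n+1}\Vert^2$ and $\dt\cdot r\Vert\alpha^{n+1}\DD\uu^{n+1}\Vert^2$. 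Using $\alpha^{n+1}\le\alpha_2$ and $2\mu^{n+1}\ge 2\mu_1$, the last term is absorbed into a fraction of the viscous dissipation precisely when $r\alpha_2^2/\mu_1\le 3/2$, while the condition $\theta\le\tfrac12$ ensures the coefficients in the $\Sig$-recursion stay nonnegative (this is where $(1-\theta)^2+ \text{cross terms}$ must be dominated). Here is where I expect the bookkeeping to be delicate: matching the exact constant $2\mu$ in $\Vert\sqrt{2\mu^{n+1}}\DD\uu^{k+1}\Vert^2$ against what is left after absorption, and getting the $\theta$-weighted $\Sig$-telescoping to close with no residual positive term, requires the two hypotheses to be used sharply rather than wastefully.

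Finally I would handle the pressure. From \eqref{UProj}, $\uu^{n+1}=\uuu^{n+1}+\tfrac{\dt}{\rho_1}\boldsymbol\nabla q^{n+1}$ is an orthogonal-type decomposition: since $\Div\uuu^{n+1}=0$ and $\uuu^{n+1}\cdot\nn|_\Gamma=0$ (Remark~\ref{RemUChap}), while $\partial_\nn q^{n+1}|_\Gamma=0$, one gets $\Vert\sqrt{\rho_1}\,\uu^{n+1}/\sqrt{\rho_1}\Vert$-type Pythagoras, i.e. $\rho_1\Vert\uu^{n+1}\Vert_{\text{weighted}}$ splits. The classical projection-scheme trick is to write the gradient term $2\dt(\boldsymbol\nabla(p^n+q^n),\uu^{n+1})$, substitute $\boldsymbol\nabla q^{n+1}=\tfrac{\rho_1}{\dt}(\uu^{n+1}-\uuu^{n+1})$, use \eqref{IncPression} $p^{n+1}=p^n+q^{n+1}$, and invoke the identity $2(\boldsymbol\nabla p^n,\boldsymbol\nabla q^{n+1}) = \Vert\boldsymbol\nabla p^{n+1}\Vert^2-\Vert\boldsymbol\nabla p^n\Vert^2+\Vert\boldsymbol\nabla q^{n+1}\Vert^2$ together with $\Vert\boldsymbol\nabla q^{n+1}\Vert^2 = \tfrac{\rho_1^2}{\dt^2}(\Vert\uu^{n+1}\Vert^2-\Vert\uuu^{n+1}\Vert^2)$ (again orthogonality), to produce the telescoping term $\tfrac{\dt^2}{\rho_1}(\Vert\boldsymbol\nabla p^{n+1}\Vert^2-\Vert\boldsymbol\nabla p^n\Vert^2)$ plus a nonnegative remainder $\propto\Vert\uuu^{n+1}\Vert^2$ that is dropped. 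Collecting the kinetic-energy telescoping, the viscous dissipation (after partial absorption), the $\Sig$-telescoping, and the pressure telescoping, and then summing over $n$ from $0$ to the target index minus one, gives the asserted bound; no Gronwall is needed here since there is no $y^n$-feedback term, the estimate being a pure energy identity with controlled sign-definite remainders.
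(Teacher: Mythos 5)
Your skeleton largely matches the paper's: testing the momentum equation of \eqref{Momentum} with $2\dt\,\uu^{n+1}$, the pointwise algebraic identity giving $\Vert\sqrt{\rho^{n+1}}\uu^{n+1}\Vert^2-\Vert\sqrt{\rho^{n}}\uu^{n}\Vert^2+\Vert\sqrt{\rho^{n}}(\uu^{n+1}-\uu^{n})\Vert^2$ (note this identity is purely algebraic and needs no use of the transport equation; likewise the convective term dies by \eqref{CalculBaseStability} thanks to the skew-symmetrized form $\tilde B$ and the zero normal trace of $\uuu^n$, not because $\rho^{n+1}\uuu^n$ is divergence free -- it is not), the pointwise projection inequality for $\Sig^{n+1}$ squared and integrated, absorption into the viscous dissipation under $r\alpha_2^2/\mu_1\le\tfrac32$, dropping the $\Vert\Sig^{n+1}-\Sig^n\Vert^2$ contribution under $\theta\le\tfrac12$, and a Gronwall-free summation. (In the $\Sig$-step you should group $\theta(\Sig^n-\Sig^{n+1})$ as a single block as the paper does: this is what yields the coefficient $\tfrac{2\theta(2\theta-1)}{r}\le 0$ in front of $\Vert\Sig^{n}-\Sig^{n+1}\Vert^2$; as you wrote it, a positive multiple of $\Vert\Sig^n-\Sig^{n+1}\Vert^2$ and an extra cross term $2r\theta(\Sig^n,\alpha^{n+1}\DD\uu^{n+1})$ survive and nothing on the left controls them.)

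The genuine gap is the pressure step. The momentum equation carries the extrapolated pressure $\boldsymbol{\nabla}(p^n+q^n)=\boldsymbol{\nabla}(2p^n-p^{n-1})$, not $\boldsymbol{\nabla}(p^n+q^{n+1})=\boldsymbol{\nabla}p^{n+1}$, and your manipulations (substituting $\boldsymbol{\nabla}q^{n+1}=\tfrac{\rho_1}{\dt}(\uu^{n+1}-\uuu^{n+1})$, using \eqref{IncPression} and the Pythagoras relation from \eqref{UProj}) only dispose of the $\boldsymbol{\nabla}p^{n+1}$ part. Expanding the actual term $-2\dt(\boldsymbol{\nabla}(2p^n-p^{n-1}),\uu^{n+1})$ yields, besides the telescoping $-\tfrac{\dt^2}{\rho_1}\bigl(\Vert\boldsymbol{\nabla}p^{n+1}\Vert^2-\Vert\boldsymbol{\nabla}p^{n}\Vert^2\bigr)$ and a term $-\tfrac{\dt^2}{\rho_1}\Vert\boldsymbol{\nabla}q^{n}\Vert^2$ of favourable sign, the term $+\tfrac{\dt^2}{\rho_1}\Vert\boldsymbol{\nabla}(q^{n+1}-q^{n})\Vert^2$ with the wrong sign (this is the paper's $\Vert\boldsymbol{\nabla}\delta^2 p^{n+1}\Vert^2$ in \eqref{dem22}); there is no droppable nonnegative remainder proportional to $\Vert\uuu^{n+1}\Vert^2$, and your identity $2(\boldsymbol{\nabla}p^n,\boldsymbol{\nabla}q^{n+1})=\Vert\boldsymbol{\nabla}p^{n+1}\Vert^2-\Vert\boldsymbol{\nabla}p^{n}\Vert^2+\Vert\boldsymbol{\nabla}q^{n+1}\Vert^2$ has the wrong sign on its last term. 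The missing idea is the paper's estimate \eqref{dem23}: subtracting the Poisson problems \eqref{Pression} at steps $n+1$ and $n$ and testing with $q^{n+1}-q^{n}$ gives $\tfrac{\dt^2}{\rho_1}\Vert\boldsymbol{\nabla}(q^{n+1}-q^{n})\Vert^2\le\rho_1\Vert\uu^{n+1}-\uu^{n}\Vert^2\le\Vert\sqrt{\rho^{n}}(\uu^{n+1}-\uu^{n})\Vert^2$, so the bad term is absorbed by exactly the increment term your kinetic-energy identity provides but which your collection never uses. Without this absorption the energy estimate does not close for the scheme as written.
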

\begin{proof}
We take the inner product of the first equation in \eqref{Momentum} with $ 2 \dt \uu^{n+1} $ in $\Ll2$. By observing that
\begin{equation*}
\Big ( 2 \uu^{n+1} , \frac{1}{2} ( \rho^{n+1} + \rho^n ) \uu^{n+1} - \rho^n \uu^n \Big ) = \Vert \sigma^{n+1} \uu^{n+1} \Vert ^2 - \Vert \sigma^{n} \uu^{n} \Vert ^2 + \Vert \sigma^{n} ( \uu^{n+1} - \uu^n) \Vert ^2 ,
\end{equation*}
where we denote $ \sigma^n = \sqrt{\rho^n} $, and recalling equation \eqref{CalculBaseStability}, we deduce
\begin{equation} \label{dem21}
\begin{aligned}
\Vert \sigma^{n+1} \uu^{n+1} \Vert ^2  &- \Vert \sigma^{n} \uu^{n} \Vert ^2 + \Vert \sigma^{n}( \uu^{n+1} - \uu^n) \Vert ^2 + 4\dt \Vert \sqrt{2 \mu^{n+1}} \DD \uu ^{n+1} \Vert ^2 \\
& = -2 \dt ( \boldsymbol{\nabla} (p^n + q^n ) , \uu^{n+1} ) - 2 \dt ( \alpha^{n+1} \Sig^{n+1} , \DD \uu^{n+1}).
\end{aligned}
\end{equation}
Now, we have to control the two terms in the right-hand side of \eqref{dem21} in order to obtain the expected inequality.\\

\textit{Term $-2 \dt ( \boldsymbol{\nabla} (p^n + q^n ) , \uu^{n+1} )$.} Recalling that $p \en - p ^{n-1} = q \en (\text{see } \eqref{IncPression} )$, we write
\begin{equation*} -2 \dt ( \boldsymbol{\nabla} (2p^n -p^{n-1} ) , \uu^{n+1} ) = 2 \dt ( \boldsymbol{\nabla} (p^{n+1} -2p^n +p^{n-1}) , \uu^{n+1} ) -2 \dt ( \boldsymbol{\nabla} p^{n+1} , \uu^{n+1} ).
\end{equation*}
Taking the inner product of the first equation in \eqref{Pression} with $\frac{2 \dt ^2}{\rho_1} \big ( (p^{n+1}-p^n)-(p^n-p^{n-1}) \big ) $ in $\L2$, we obtain
\begin{equation*}
\begin{aligned}
\frac{\dt ^2}{\rho_1} \Big ( \Vert \boldsymbol{\nabla} (p^{n+1}-p^n) \Vert ^2 - & \Vert \boldsymbol{\nabla} (p^{n}-p^{n-1}) \Vert ^2 + \Vert \boldsymbol{\nabla} (p^{n+1}-2p^n+p^{n-1}) \Vert ^2\Big ) \\
& = 2 \dt (\uu^{n+1} , \boldsymbol{\nabla} (p^{n+1}-2p^n+p^{n-1}) ).
\end{aligned}
\end{equation*}
Taking the inner product of the first equation in \eqref{Pression} with $-\frac{2 \dt ^2}{\rho_1} p^{n+1}$ in $\L2$, we obtain
\begin{equation*}
-\frac{\dt ^2}{\rho_1} \Big ( \Vert\boldsymbol{\nabla} p^{n+1} \Vert ^2 -  \Vert \boldsymbol{\nabla} p^{n} \Vert ^2 + \Vert \boldsymbol{\nabla} (p^{n+1}- p^n) \Vert ^2\Big ) = -2 \dt (\uu^{n+1} , \boldsymbol{\nabla} p^{n+1}).
\end{equation*}
Using the last three equalities, we deduce
\begin{equation} \label{dem22}
\begin{aligned}
-2 \dt ( \boldsymbol{\nabla} (2p^n -p^{n-1} )& , \uu^{n+1} ) =  \frac{\dt ^2 }{\rho_1} \Vert \boldsymbol{\nabla} (p^{n+1}-2p^n+p^{n-1}) \Vert ^2 \\
& +\frac{\dt ^2}{\rho_1} \Big [ - \Vert \boldsymbol{\nabla} p^{n+1} \Vert ^2 + \Vert \boldsymbol{\nabla} p^{n} \Vert ^2 - \Vert \boldsymbol{\nabla} (p^n-p^{n-1}) \Vert ^2\Big ]. 
\end{aligned}
\end{equation}
Using \eqref{Pression}, we have
\begin{equation} \label{dem23}
\frac{\dt^2}{\rho_1} \Vert \boldsymbol{\nabla} ( p^{n+1}-2p^n+p^{n-1}) \Vert ^2 \leq \Vert \sigma^n (\uu^{n+1}-\uu^n ) \Vert ^2 .
\end{equation}
Combining \eqref{dem21}, \eqref{dem22} and \eqref{dem23}, we finally deduce
\begin{equation} \label{dem24}
\begin{aligned}
  \Vert \sigma^{n+1} \uu^{n+1} \Vert ^2 & - \Vert \sigma^{n} \uu^{n} \Vert ^2 + 4 \dt \Vert \sqrt{ 2 \mu^{n+1}} \DD \uu ^{n+1} \Vert ^2 \\
  &  + \frac{\dt ^2}{\rho_1} \Big [ \Vert \boldsymbol{\nabla}p^{n+1} \Vert ^2 - \Vert \boldsymbol{\nabla} p^{n} \Vert ^2 + \Vert \boldsymbol{\nabla} (p^n-p^{n-1}) \Vert ^2\Big ] \\
  & \leq - 2 \dt ( \alpha^{n+1} \Sig^{n+1} , \DD \uu^{n+1}).
\end{aligned}
\end{equation}

\textit{Term $-2\dt ( \alpha^{n+1} \Sig^{n+1} , \DD \uu^{n+1})$.} Using the second equation of \eqref{Momentum}, we have
\begin{equation*}
\vert \Sig^{n+1} \vert \leq \big \vert \Sig^{n+1} + r \alpha^{n+1} \DD \uu ^{n+1} + \theta (\Sig^{n} - \Sig^{n+1}) \big \vert.
\end{equation*}
By taking the square of this inequality, using the identity $2a(a-b)=a^2-b^2+(a-b)^2,$ the Cauchy-Schwarz and Young's inequalities and integrating over $\Omega$, we obtain
\begin{equation} \label{dem25}
\begin{aligned}
  & - 2 \dt ( \alpha^{n+1} \Sig^{n+1} , \DD \uu ^{n+1} ) \leq  2 r   \frac{\alpha_2 ^2}{ \mu_1} \dt \Vert \sqrt{2 \mu^{n+1}} \DD \uu ^{n+1} \Vert ^2  \\
  &  \qquad\qquad+ \frac{2 \theta   (2 \theta - 1 )}{r} \dt \Vert \Sig^{n} - \Sig^{n+1}   \Vert ^2 - \frac{ 2 \theta  }{r} \dt \Vert \Sig^{n+1} \Vert ^2 + \frac{ 2 \theta  }{r} \dt \Vert \Sig^{n} \Vert ^2.
\end{aligned}
\end{equation}
Combining \eqref{dem24} and \eqref{dem25}, we obtain
\begin{equation*}\begin{aligned}
& \Vert \sigma^{n+1} \uu^{n+1} \Vert ^2  - \Vert \sigma^{n} \uu^{n} \Vert ^2 + 2\dt \Big ( 2 - r\frac{\alpha_2^2}{\mu_1} \Big ) \Vert \sqrt{2 \mu^{n+1}} \DD \uu ^{n+1} \Vert ^2 \\
& + \frac{\dt ^2}{\rho_1} \Big [ \Vert \boldsymbol{\nabla} p^{n+1} \Vert ^2 - \Vert \boldsymbol{\nabla} p^{n} \Vert ^2 + \Vert \boldsymbol{\nabla} (p^n-p^{n-1}) \Vert ^2\Big ] \\
& + \frac{2 \theta (1 - 2 \theta )}{r} \dt \Vert \Sig^{n} - \Sig^{n+1}   \Vert ^2 + \frac{2 \theta}{r} \dt \Vert \Sig^{n+1} \Vert ^2 - \frac{2 \theta}{r}\dt \Vert \Sig^{n} \Vert ^2 \leq 0.
\end{aligned} 
\end{equation*}
We conclude the proof by using $ r\frac{\alpha_2^2}{\mu_1} \le \frac 32$ and $ \theta \leq \frac 12 ,$ and summing from $0$ up to $n-1$ the last inequality.
\end{proof}

Note that the above estimate does not provide a bound on the pressure independent on $\dt$. We will derive such a stability result by using the final error estimate~\eqref{eqFinal} in the proof of Theorem~\ref{thm:ErreurFinal} (see Remark~\ref{rem:PressureBound}).\\
Moreover, using the second equation in \eqref{Momentum}, $ \Sig \en \in \mathbf{\Lambda} $. Hence, for all $ 0 \leq n \leq N$, we have
$$ \Vert \Sig \en \Vert _{\Linf} \leq 1. $$
\section{Error analysis}
As usual in error estimates, we have to assume that the solutions of \eqref{ModelContinuFinal} have a sufficient regularity. We assume in this section that the solution
$( \uu, \rho, p, \Sigma )$ of \eqref{ModelContinuFinal} satisfies
\begin{equation} \label{Regularity_Assumptions}
\begin{aligned}
  & \uu \in  L^{\infty}(0,T;\mathbf{L}^{\infty}(\Omega)), \quad \boldsymbol{\nabla} \uu \in  L^{\infty}(0,T; \mathbf{L}^{\infty}(\Omega)), \quad \partial _t \uu \in  L^{\infty}(0,T; \mathbf{L} ^{3}(\Omega)), \\
  & \partial _{tt} \uu \in  L^{2}(0,T; \mathbf{L}^{6/5}(\Omega)), \quad \boldsymbol{\nabla} \rho \in  L^{\infty}(0,T; \mathbf{L} ^{\infty}(\Omega)), \\
  & \partial_{tt} \rho \in  L^{2}(0,T;L^{2}(\Omega)), \quad \partial _t p \in  L^{2}(0,T,H^1(\Omega)), \quad \partial _t \Sig \in  L^{2}(0,T; \mathbf{L} ^{2}(\Omega)). 
\end{aligned}
\end{equation}
Under this regularity assumptions on $\uu$ (see~\cite{boyerfabrie}), the solution $\rho$ of the mass conservation equation \eqref{ModelContinuEq1} satisfies the maximum principle, that is almost everywhere in $(0,T)\times \Omega$, we have
$$ \rho _1 \leq \rho \leq \rho _2. $$
We introduce the following errors
\begin{align*}
& \varepsilon^n = \rho(t_n) - \rho^n, \quad  \ee^n = \uu(t_n) - \uu^n, \quad \hat{\ee} \en = \uu ( \tn) - \hat{\uu} \en , \\
& r^n = p(t_n) - p^n, \quad \sss ^n = \Sig(t_n) - \Sig ^n.
\end{align*}
We denote $\Vert\mu^\prime\Vert_\infty=sup _{x \in \mathbb{R}} \vert \mu^\prime(x)\vert$ and $\Vert \alpha^\prime\Vert_\infty = \sup _{x \in \mathbb{R}} \vert \alpha^\prime(x) \vert$. Since $\mu$ and $\alpha$ are functions of class $\mathcal{C}^1$, we obtain
$$ \Vert \mu ( \rho ( \tn ) ) - \mu \en \Vert \leq \Vert \mu '  \Vert _\infty \Vert \varepsilon \en \Vert \quad \text{and} \quad \Vert \alpha ( \rho ( \tn ) ) - \alpha \en \Vert \leq \Vert \alpha '  \Vert _\infty \Vert \varepsilon \en \Vert.$$
In the rest of the paper, $C$ denotes a constant independent of the time step $\dt$.
Since $p|_{t=0} $ is not prescribed in the initial conditions, we can not choose $p^0 = p|_{t=0}. $ Therefore $ r^0 \ne 0 $ so that we assume
\begin{equation}
\label{HypErrPressIni}
\Vert r^0 \Vert ^2 + \Vert \boldsymbol{\nabla} r^0 \Vert ^2 \leq C \dt ^2.
\end{equation}
We will also assume
\begin{equation}\label{AssumptionDt}
\dt < 1/6.
\end{equation}
\begin{rem} \label{EEchap}
Due to \eqref{UProj} and the above definitions, we have $ \ee \en = \hat{\ee} \en - \frac{\dt}{\rho_1} \boldsymbol{\nabla} q \en.$
Recalling that $\Div \eee \en =0 $ and $ \eee \en \cdot \nn |_{\Gamma} =0$ (see Remark~\ref{RemUChap}), we deduce
\begin{equation*}
\Vert \hat{\ee} \en \Vert ^2 \leq  \Vert \hat{\ee} \en \Vert ^2 + \frac{\dt ^2 }{\rho_1 ^2 } \Vert  \boldsymbol{\nabla} q \en \Vert ^2 =\Vert \ee \en \Vert ^2 .
\end{equation*}
\end{rem}

\subsection{Preliminary error estimates for the density}
As for the stability analysis, we first focus on the sequence $(\rho^n)_{\lbrace 0\leq n\leq N \rbrace}$ governed by equation \eqref{PhaseEquation}.
\begin{theorem} \label{thm:ErreurPhi}
Under the regularity assumptions \eqref{Regularity_Assumptions}, if $\boldsymbol{u}_0$ and $\rho_0$ satisfy \eqref{CondIniBoundModCont2}, then, for all $n$ such that $0 \leq n \leq N-1$, we have
\begin{equation} \label{ThmErreurPhiEq1}
\dt \sum_{k=0}^{n} \Vert  \rho ( t_{k+1} ) - \rho^{k+1} \Vert ^2 \leq C \dt ^2 + C \dt \sum_{k=0}^{n} \Vert \uu(t_{k+1})  - \uu^{k+1} \Vert ^2.
\end{equation}
\end{theorem}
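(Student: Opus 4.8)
The plan is to write a discrete equation for the density error $\varepsilon^{k+1}$ by comparing the continuous transport equation \eqref{ModelContinuEq1} evaluated at $t_{k+1}$ with the scheme \eqref{PhaseEquation}, then test it against $\varepsilon^{k+1}$ (in the viscosity-solution sense, i.e. passing through the regularized problem \eqref{Viscosity} and letting $\epsilon\to0$), and finally apply the discrete Gronwall lemma. The error equation reads
\begin{equation*}
\frac{\varepsilon^{k+1}-\varepsilon^k}{\dt} + \uuu^k\cdot\boldsymbol{\nabla}\varepsilon^{k+1} = -\,\big(\uu(t_{k+1})-\uuu^k\big)\cdot\boldsymbol{\nabla}\rho(t_{k+1}) + R^{k+1},
\end{equation*}
where $R^{k+1}$ is the consistency (truncation) error coming from replacing $\partial_t\rho(t_{k+1})$ by $\frac{\rho(t_{k+1})-\rho(t_k)}{\dt}$, which by Taylor expansion with integral remainder satisfies $\dt\sum_k\|R^{k+1}\|^2 \le C\dt^2$ thanks to $\partial_{tt}\rho\in L^2(0,T;L^2(\Omega))$ in \eqref{Regularity_Assumptions}.

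Next I would multiply the error equation by $\varepsilon^{k+1}$ and integrate over $\Omega$. The term $\io \uuu^k\cdot\boldsymbol{\nabla}\varepsilon^{k+1}\,\varepsilon^{k+1}$ vanishes by the orthogonality property \eqref{CalculBaseStability0}, since Remark~\ref{RemUChap} (or Theorem~\ref{thm:StabilityPhi}) guarantees $\uuu^k\in\boldsymbol{\mathcal{H}}$ and, by the elliptic regularity of \eqref{Pression}, $\uuu^k\in\mathbf{L}^3(\Omega)$. For the time-difference term I use the standard identity $(\varepsilon^{k+1}-\varepsilon^k,\varepsilon^{k+1}) = \tfrac12(\|\varepsilon^{k+1}\|^2-\|\varepsilon^k\|^2+\|\varepsilon^{k+1}-\varepsilon^k\|^2)$. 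On the right-hand side, the source term is split as $-\big((\uu(t_{k+1})-\uu^{k+1})\cdot\boldsymbol{\nabla}\rho(t_{k+1}),\varepsilon^{k+1}\big) - \big((\uu^{k+1}-\uuu^k)\cdot\boldsymbol{\nabla}\rho(t_{k+1}),\varepsilon^{k+1}\big)$; using $\|\boldsymbol{\nabla}\rho(t_{k+1})\|_{\mathbf{L}^\infty}\le C$ from \eqref{Regularity_Assumptions} and Cauchy--Schwarz followed by Young's inequality, the first piece is bounded by $C\|\mathbf{e}^{k+1}\|^2 + C\|\varepsilon^{k+1}\|^2$. For the second piece I write $\uu^{k+1}-\uuu^k = (\uu^{k+1}-\uuu^{k+1}) + (\uuu^{k+1}-\uuu^k) = \frac{\dt}{\rho_1}\boldsymbol{\nabla}q^{k+1} + (\uuu^{k+1}-\uuu^k)$; the first summand contributes $C\dt^2\|\boldsymbol{\nabla}q^{k+1}\|^2/\rho_1^2$, and after summation $\dt\sum_k \dt^2\|\boldsymbol{\nabla}q^{k+1}\|^2$ is controlled because the stability theorem already gives $\tfrac{\dt^2}{\rho_1}\|\boldsymbol{\nabla}p^n\|^2$ bounded uniformly (and $\boldsymbol{\nabla}q^{k+1}=\boldsymbol{\nabla}p^{k+1}-\boldsymbol{\nabla}p^k$), so this term is $O(\dt^2)$; the increment $\uuu^{k+1}-\uuu^k$ is handled by rewriting it via $\mathbf{e}^{k+1}-\mathbf{e}^k$ plus the time increment of the exact velocity, the latter being $O(\dt)$ in $\mathbf{L}^{3}$ by $\partial_t\uu\in L^\infty(0,T;\mathbf{L}^3(\Omega))$ — alternatively one keeps it on the left by summation by parts, which I prefer to avoid; either way it yields a term absorbable into $C\dt\sum\|\mathbf{e}^{k+1}\|^2$ plus $O(\dt^2)$.

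After these estimates, summing from $k=0$ to $n$ (with $\varepsilon^0=0$ since $\rho^0=\rho_0$) gives
\begin{equation*}
\|\varepsilon^{n+1}\|^2 + \sum_{k=0}^{n}\|\varepsilon^{k+1}-\varepsilon^k\|^2 \le C\dt^2 + C\dt\sum_{k=0}^{n}\|\mathbf{e}^{k+1}\|^2 + C\dt\sum_{k=0}^{n}\|\varepsilon^{k+1}\|^2,
\end{equation*}
and the discrete Gronwall lemma (Lemma~\ref{lem:LGdiscret}, valid since $\dt<1/6$ by \eqref{AssumptionDt} makes $\dt\,g^n<1$) absorbs the last sum, yielding $\|\varepsilon^{n+1}\|^2 \le C\dt^2 + C\dt\sum_{k=0}^{n}\|\mathbf{e}^{k+1}\|^2$ for every $n$. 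Multiplying by $\dt$ and summing once more over $n$ (a crude bound, since the right side is already monotone, or just applying the bound at the top index and multiplying by the number of terms times $\dt\le T$) produces exactly \eqref{ThmErreurPhiEq1}. The main obstacle I anticipate is the rigorous treatment of the error equation at the level of viscosity solutions: one must justify testing with $\varepsilon^{k+1}$ by working with $\rho^{k+1}_\epsilon$ in \eqref{Viscosity}, checking that the added viscous term has a favorable sign (it does, being $\epsilon\|\boldsymbol{\nabla}\rho^{k+1}_\epsilon\|^2\ge0$), and then passing to the limit $\epsilon\to0$ using the weak-$*$ convergence \eqref{CVFBrezis} together with lower semicontinuity of the norm; the algebraic manipulation of the convective increment $\uuu^{k+1}-\uuu^k$ so that nothing worse than $\dt\sum\|\mathbf{e}^{k+1}\|^2$ and $O(\dt^2)$ survives is the other delicate bookkeeping point.
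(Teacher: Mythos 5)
Your overall skeleton is the same as the paper's (error equation at the level of the regularized problem \eqref{Viscosity}, testing with $\varepsilon^{k+1}_\epsilon$, orthogonality \eqref{CalculBaseStability0}, passage to the limit $\epsilon\to 0$, discrete Gronwall, then multiply by $\dt$ and sum again), and the truncation and $\delta\uu(t_{k+1})$ pieces are handled as in the paper. The genuine gap is in your treatment of the consistency term $(\uu(t_{k+1})-\uuu^k)\cdot\boldsymbol{\nabla}\rho(t_{k+1})$: after splitting off $\tfrac{\dt}{\rho_1}\boldsymbol{\nabla}q^{k+1}$ you claim that
\begin{equation*}
\dt\sum_{k}\dt^2\Vert\boldsymbol{\nabla}q^{k+1}\Vert^2 = O(\dt^2)
\end{equation*}
``because the stability theorem gives $\tfrac{\dt^2}{\rho_1}\Vert\boldsymbol{\nabla}p^n\Vert^2$ bounded uniformly''. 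This is false at the required order: the stability estimate only yields $\dt^2\Vert\boldsymbol{\nabla}q^{k+1}\Vert^2\le 2\dt^2\bigl(\Vert\boldsymbol{\nabla}p^{k+1}\Vert^2+\Vert\boldsymbol{\nabla}p^{k}\Vert^2\bigr)\le C$ uniformly in $k$, so the sum above is only $O(\dt\cdot N)=O(1)$; even the sharper bound hidden in the stability proof, $\dt^2\sum_k\Vert\boldsymbol{\nabla}\delta p^{k}\Vert^2\le C$, only gives $O(\dt)$, which after the final multiplication by $\dt$ and summation degrades the right-hand side of \eqref{ThmErreurPhiEq1} from $C\dt^2$ to $C\dt$ and would destroy the first-order rate of Theorem~\ref{thm:ErreurFinal}. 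A $\dt$-independent bound $\Vert\boldsymbol{\nabla}p^{n}\Vert\le C$ is only obtained in Remark~\ref{rem:PressureBound} as a consequence of the final error estimate, so invoking it here would be circular.

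The obstruction is self-inflicted by decomposing $\uu(t_{k+1})-\uuu^k$ through $\uu^{k+1}$ and $\uuu^{k+1}$, which drags in the pressure increment at the new time level. The paper instead writes $\uu(t_{k+1})-\uuu^k=\delta\uu(t_{k+1})+\eee^{\,k}$ with $\eee^{\,k}=\uu(t_k)-\uuu^k$, and uses Remark~\ref{EEchap} ($\Vert\eee^{\,k}\Vert\le\Vert\ee^{\,k}\Vert$, a consequence of the orthogonality of $\boldsymbol{\nabla}q^{k}$ to divergence-free fields), so no pressure gradient ever has to be estimated and the velocity error enters only through $\dt\sum_k\Vert\ee^{\,k}\Vert^2$ (which, since $\ee^0=0$, is exactly the sum in \eqref{ThmErreurPhiEq1}). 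Alternatively, note that in your own decomposition the $\tfrac{\dt}{\rho_1}\boldsymbol{\nabla}q^{k+1}$ term cancels algebraically once you rewrite $\uuu^{k+1}-\uuu^k=\delta\uu(t_{k+1})-\eee^{\,k+1}+\eee^{\,k}$ and use $\eee^{\,k+1}=\ee^{\,k+1}+\tfrac{\dt}{\rho_1}\boldsymbol{\nabla}q^{k+1}$; but estimating it separately, as you propose, does not close at the needed order.
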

\begin{proof}
The $\Linf $-regularity of $\rho\enn$ is not sufficient to derive the expected result. For this reason, the regularized solution $\rho\en _\epsilon \in \Hach1$, solution of~\eqref{Viscosity}, is considered. We denote $\varepsilon \en _{\epsilon} = \rho(\tn) - \rho\en _{\epsilon}$. 
By subtracting the first equation of \eqref{ModelContinuEq1} taken at time $t_{n+1}$ from \eqref{Viscosity}, we obtain
\begin{equation}\label{ErrorEqEps}
  \begin{aligned}
\frac{\varepsilon^{n+1} _{\epsilon} -\varepsilon^n}{\dt} & = \frac{\rho(\tnn)-\rho(\tn)}{\dt}- \partial_t \rho(\tnn) - \hat{\uu}^n \cdot \boldsymbol{\nabla} \varepsilon \enn _{\epsilon} \\
& - \hat{\ee}^n \cdot \boldsymbol{\nabla} \rho (\tnn) - \delta \uu (\tnn) \cdot \boldsymbol{\nabla} \rho(\tnn) - \epsilon \Delta \rho\enn _{\epsilon},
\end{aligned} 
\end{equation}
where by invoking Taylor's formulae, we have
\begin{equation*}
 \frac{\rho(\tnn)-\rho(\tn)}{\dt}- \partial_t \rho(\tnn) = \frac{1}{\dt} \int_{t_n}^{\tnn} (\tn - t) \partial_{tt}\rho(t)dt.
\end{equation*}
Now, by taking the $\L2$-inner product of the error equation \eqref{ErrorEqEps} with $2 \dt \varepsilon\enn_{\epsilon} $ and using the above equality, we obtain
\begin{equation*} \begin{aligned}
&  \Vert \varepsilon \enn _{\epsilon} \Vert ^2 - \Vert \varepsilon \en \Vert ^2 + \Vert \varepsilon \enn _{\epsilon} -\varepsilon \en \Vert ^2
                                                         = 2 \bigg ( \int_{t_n}^{\tnn} (\tn - t) \partial_{tt}\rho(t)dt ,  \varepsilon _{\epsilon} \enn \bigg ) \\
&\quad - 2 \dt \int_{\Omega}  \varepsilon \enn _{\epsilon} \bigl(\eee ^n \cdot \boldsymbol{\nabla} \rho(\tnn)+ \delta \uu ( \tnn) \cdot \boldsymbol{\nabla}\rho( \tnn) \bigr) 
                                              + 2 \dt \epsilon (\boldsymbol{\nabla} \rho \enn _{\epsilon}, \boldsymbol{\nabla} \varepsilon \enn _{\epsilon} ). 
\end{aligned} 
\end{equation*}
By writing
\begin{equation*}
  2 \dt \epsilon (\boldsymbol{\nabla}\rho \enn _{\epsilon}, \boldsymbol{\nabla} \varepsilon \enn _{\epsilon} ) = 2 \dt \epsilon (\boldsymbol{\nabla} \rho \enn _{\epsilon}, \boldsymbol{\nabla} \rho ( \tnn)  ) - 2 \dt \Vert \sqrt{\epsilon}\boldsymbol{\nabla} \rho \enn _{\epsilon} \Vert^2,
\end{equation*}
we obtain
\begin{equation} \label{avlimite}
\begin{aligned}
\Vert \varepsilon \enn _{\epsilon} \Vert ^2 & - \Vert \varepsilon \en \Vert ^2 + \Vert \varepsilon \enn _{\epsilon} -\varepsilon \en \Vert ^2 + 2 \dt \Vert \sqrt{\epsilon} \boldsymbol{\nabla} \rho \enn _{\epsilon} \Vert ^2  \\
& = 2 \bigg ( \int_{t_n}^{\tnn} (\tn - t) \partial_{tt}\rho(t)dt ,  \varepsilon _{\epsilon} \enn \bigg ) - 2 \dt \int_{\Omega}  \eee ^n \cdot \boldsymbol{\nabla} \rho (\tnn) \varepsilon \enn _{\epsilon} \\
& - 2 \dt \int_{\Omega}  \delta \uu ( \tnn) \cdot \boldsymbol{\nabla} \rho ( \tnn) \varepsilon \enn _{\epsilon} + 2 \dt \epsilon (\boldsymbol{\nabla} \rho \enn _{\epsilon}, \boldsymbol{\nabla} \rho ( \tnn) ). 
\end{aligned}
\end{equation}
Making $\epsilon$ tends to $0$ in \eqref{avlimite}, using \eqref{CVFBrezis} and the inequality $\Vert \varepsilon \enn \Vert \leq  \lim \inf _{\epsilon} \Vert \varepsilon \enn _{\epsilon} \Vert,$ we derive
\begin{equation} \label{dem42}
\begin{aligned}
  \Vert \varepsilon \enn \Vert ^2 & - \Vert \varepsilon \en \Vert ^2 \leq 2 \bigg ( \int_{t_n}^{\tnn} (\tn - t) \partial_{tt}\rho(t)dt ,  \varepsilon \enn \bigg ) \\
   & - 2 \dt \int_{\Omega}  \eee ^n \cdot \boldsymbol{\nabla} \rho (\tnn) \varepsilon \enn 
     - 2 \dt \int_{\Omega}  \delta \uu ( \tnn) \cdot \boldsymbol{\nabla} \rho ( \tnn) \varepsilon \enn . 
\end{aligned}
\end{equation}
We now estimate the terms in the right-hand side of \eqref{dem42}
\begin{equation*} \begin{aligned} 
    2 \int_{\Omega} \Bigl( \int_{t_n}^{\tnn} (\tn - t) \partial_{tt}dt \rho(t)dt\Bigr)\, \varepsilon\enn
                                    & \leq \dt ^2 \int_{t_n}^{\tnn} \Vert \partial_{tt} \rho( t) \Vert^2 dt + \dt \Vert \varepsilon  \enn \Vert ^2, \\
    - 2 \dt \int_{\Omega}  \eee ^n \cdot \boldsymbol{\nabla} \rho (\tnn) \varepsilon \enn  & \leq \dt \Vert \boldsymbol{\nabla} \rho  \Vert _{L^{\infty}(0,T,L ^{\infty}(\Omega))} ^2 \Vert \eee ^n \Vert ^2 + \dt  \Vert \varepsilon \enn \Vert ^2,
\end{aligned}
\end{equation*}
and
\begin{equation*} \begin{aligned} 
   & - 2 \dt \int_{\Omega}  \delta \uu ( \tnn) \cdot \boldsymbol{\nabla} \rho ( \tnn) \varepsilon \enn \\
   & \phantom{- 2 \dt \int_{\Omega}  \delta \uu ( \tnn) \cdot }   \leq 2  \dt ^\frac{3}{2} \Big ( \int_{\tn} ^{\tnn}  \Vert \partial _t \uu (t) \Vert _{L ^{4}(\Omega)} ^2 dt \Big ) ^\frac{1}{2} \Vert \boldsymbol{\nabla} \rho  \Vert _{L^{\infty}(0,T,L ^{4}(\Omega))} \Vert \varepsilon \enn \Vert \\
  & \phantom{- 2 \dt \int_{\Omega}  \delta \uu ( \tnn) \cdot } \leq \dt ^2 \Vert \boldsymbol{\nabla} \rho  \Vert ^2 _{L^{\infty}(0,T,L ^{4}(\Omega))} \int_{\tn} ^{\tnn}  \Vert \partial _t \uu (t) \Vert _{L ^{4}(\Omega)} ^2  dt  + \dt \Vert \varepsilon \enn \Vert ^2.
\end{aligned}\end{equation*}
Reporting the above inequalities in \eqref{dem42} and summing over $n$ from $0$ to $j$, with $ 0 \leq j \leq N-1$, we have
\begin{equation*}
\begin{aligned}
\Vert \varepsilon ^{j+1} \Vert ^2 & \leq  \dt ^2 \big ( \Vert \partial_{tt} \rho  \Vert ^2 _{L^{2}(0,T,L ^{2}(\Omega))} + \Vert \partial _t \uu \Vert ^2 _{L^{2}(0,T,L ^{4}(\Omega))} \Vert \boldsymbol{\nabla} \rho  \Vert ^2 _{L^{\infty}(0,T,L ^{4}(\Omega))} \big ) \\
& +3 \dt \sum_{n=0} ^j \Vert \varepsilon \enn \Vert ^2 + \dt \Vert \boldsymbol{\nabla} \rho \Vert _{L^{\infty}(0,T,L ^{\infty}(\Omega))} ^2 \sum _{n=0}^{j} \Vert \eee ^n \Vert ^2 .
\end{aligned}
\end{equation*}
Using the discrete Gronwall Lemma~\ref{lem:LGdiscret}, the assumption~\eqref{AssumptionDt}, and Remark~\ref{EEchap} we derive
\begin{equation*}
\begin{aligned}
\Vert \varepsilon ^{j+1} \Vert ^2 \leq  &  \dt ^2 \exp (6T) \Big ( \Vert \partial_{tt} \rho  \Vert ^2 _{L^{2}(0,T,L ^{2}(\Omega))} + \Vert \partial _t \uu \Vert ^2 _{L^{2}(0,T,L ^{4}(\Omega))} \Vert \boldsymbol{\nabla} \rho  \Vert ^2 _{L^{\infty}(0,T,L ^{4}(\Omega))} \Big ) \\
& + \dt \exp (6T) \Vert \boldsymbol{\nabla} \rho  \Vert _{L^{\infty}(0,T,L ^{\infty}(\Omega))} ^2 \sum _{n=0}^{j} \Vert \ee ^n \Vert ^2 .
\end{aligned}
\end{equation*}
We conclude the proof of Theorem~\ref{thm:ErreurPhi} by multiplying by $\dt$ and summing over $j$ from~$0$ to~$k$, with $k\leq N-1$, the above inequality.
\end{proof}

\subsection{Final error estimates}
\begin{theorem} \label{thm:ErreurFinal}
Under the regularity assumption \eqref{Regularity_Assumptions}, if the initial conditions satisfy~\eqref{CondIniBoundModCont2} and~\eqref{HypErrPressIni}, 
if $ \theta < \frac{1}{2},\, \delta t<1/6,\, r\alpha_2^2 \leq \frac{\mu_1}{12 (1 + C(\Omega)^2)} $ where $C(\Omega)$ is the Poincar\'e's constant,
then, for all $n$ such that $ 1 \leq n \leq N $, the discrete velocity solution of the scheme \eqref{PhaseEquation}-\eqref{UProj} satisfies the
following error estimate
\begin{equation*} 
\begin{aligned}
&\Vert  \uu(t_n) - \uu ^n \Vert ^2 + \dt \sum_{k=0} ^{N-1} \Vert \uu(t_{k+1}) - \uu ^{k+1} \Vert ^2 _1 \leq C \dt ( \dt + \theta ).
\end{aligned}
\end{equation*}
\end{theorem}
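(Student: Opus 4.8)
The plan is to derive an energy equation for the velocity error by subtracting the scheme \eqref{PhaseEquation}--\eqref{UProj} from the continuous equations \eqref{ModelContinuFinal} evaluated at $t_{n+1}$, then close the resulting inequality via the discrete Gronwall Lemma~\ref{lem:LGdiscret}. First I would write the momentum error equation for $\ee\enn$ (and $\hat\ee\enn$), being careful to split each nonlinear or variable-coefficient term into a "consistency" piece (difference quotient versus $\partial_t\uu$, the convective term, and the coefficient differences $\mu(\rho(\tnn))-\mu\enn$, $\alpha(\rho(\tnn))-\alpha\enn$ controlled by $\Vert\mu'\Vert_\infty\Vert\varepsilon\enn\Vert$ and $\Vert\alpha'\Vert_\infty\Vert\varepsilon\enn\Vert$) plus a genuine error piece. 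Taking the $\Ll2$-inner product with $2\dt\,\ee\enn$ and using the skew-symmetry \eqref{CalculBaseStability} of $\tilde B$ kills the convective error term up to lower-order remainders; the telescoping identity $2a(a-b)=a^2-b^2+(a-b)^2$ (as already used in \eqref{dem21}) produces $\Vert\sigma\enn\ee\enn\Vert^2-\Vert\sigma\en\ee\en\Vert^2$ plus a positive jump term, and the viscous term gives $4\dt\Vert\sqrt{2\mu\enn}\DD\ee\enn\Vert^2$.

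Next I would handle the pressure/projection error exactly as in the stability proof: rewrite $\boldsymbol\nabla(p^n+q^n)$ using $p\en-p^{n-1}=q\en$, introduce $r\enn=p(\tnn)-p\enn$, test the Poisson equation \eqref{Pression} against suitable combinations of $r\enn,r^n,r^{n-1}$ to get a telescoping sum $\frac{\dt^2}{\rho_1}(\Vert\boldsymbol\nabla r\enn\Vert^2-\Vert\boldsymbol\nabla r^n\Vert^2+\Vert\boldsymbol\nabla(r^n-r^{n-1})\Vert^2)$, where the second-difference term is reabsorbed by the velocity-jump term $\Vert\sigma^n(\ee\enn-\ee^n)\Vert^2$ as in \eqref{dem23}. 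The plastic term $-2\dt(\alpha\enn\sss\enn,\DD\ee\enn)$ together with the Bingham projection identity $\sss\enn=\mathbb{P}(\dots)-\mathbb{P}(\dots)$ and the nonexpansiveness of $\mathbb{P}$ gives, after squaring and using $2a(a-b)=a^2-b^2+(a-b)^2$, a bound of the form $2r\frac{\alpha_2^2}{\mu_1}\dt\Vert\sqrt{2\mu\enn}\DD\ee\enn\Vert^2+\frac{2\theta(2\theta-1)}{r}\dt\Vert\delta\sss\enn\Vert^2-\frac{2\theta}{r}\dt\Vert\sss\enn\Vert^2+\frac{2\theta}{r}\dt\Vert\sss^n\Vert^2$ plus a $\theta$-order consistency term coming from replacing $\ell\DD\uu(\tnn)$ by $r\alpha\enn\DD\uu\enn$ in the continuous projection — this last term is exactly the source of the $\theta$ on the right-hand side, and by $\theta<1/2$ and the smallness of $r\alpha_2^2/\mu_1$ the $\Vert\DD\ee\enn\Vert^2$ and $\Vert\delta\sss\enn\Vert^2$ terms keep a strictly positive coefficient.

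With all sign-definite terms collected on the left, the remaining right-hand side consists of: the time-consistency remainders ($\partial_{tt}\uu$ in $L^2(L^{6/5})$, the $\partial_t\uu$-type pieces, $\partial_t p$ in $L^2(H^1)$, $\partial_t\Sig$ in $L^2(L^2)$, and $\partial_{tt}\rho$), each integrated against $\ee\enn$ or $\DD\ee\enn$ and bounded by $C\dt^2\times(\text{bounded integrals})$ plus $\varepsilon$-absorbable $\Vert\DD\ee\enn\Vert^2$ terms via Young; the coefficient-error terms $\Vert\mu'\Vert_\infty\Vert\varepsilon\enn\Vert\,\Vert\boldsymbol\nabla\uu(\tnn)\Vert_{\mathbf L^\infty}\Vert\DD\ee\enn\Vert$ and similarly for $\alpha$; and the $\theta$-consistency term $\le C\dt\theta$. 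The density errors $\Vert\varepsilon\enn\Vert^2$ are controlled by Theorem~\ref{thm:ErreurPhi}, which bounds $\dt\sum_k\Vert\varepsilon^{k+1}\Vert^2$ by $C\dt^2+C\dt\sum_k\Vert\ee^{k+1}\Vert^2$, so they feed back a term of the form $g^k\Vert\ee\enn\Vert^2$ (after summation) plus $C\dt^2$. Summing over $n$, using Remark~\ref{EEchap} to replace $\Vert\hat\ee^n\Vert$ by $\Vert\ee^n\Vert$, the Poincar\'e inequality to pass from $\Vert\DD\ee\enn\Vert$ to $\Vert\ee\enn\Vert_1$, and the smallness assumption $r\alpha_2^2\le\mu_1/(12(1+C(\Omega)^2))$ to make the net viscous coefficient exceed a fixed positive multiple of the full $\Vert\cdot\Vert_1^2$-norm, one arrives at an inequality of exactly the form required by Lemma~\ref{lem:LGdiscret} with $B=C\dt(\dt+\theta)$ (using \eqref{HypErrPressIni} for the $n=0$ pressure term and $\ee^0=0$) and $\tau\sum g^n\le C$; applying the lemma yields the stated estimate.

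The main obstacle I anticipate is the bookkeeping of the plastic-tensor consistency term: one must show that replacing the continuous projection $\Sig(\tnn)=\mathbb{P}(\Sig(\tnn)+\ell\DD\uu(\tnn))$ (valid for \emph{any} $\ell>0$) by the discrete one with the $\tn$-to-$\tnn$ relaxation $\theta(\Sig^n-\Sig\enn)$ and the $t$-dependent coefficient $r\alpha\enn$ produces an error controllable purely by $C\dt\theta$ together with $\Vert\DD\ee\enn\Vert^2$ and density-error terms — the flexibility in $\ell$ must be exploited, choosing $\ell$ matched to $r\alpha\enn$ pointwise, and the nonexpansiveness of $\mathbb{P}$ must be used so that no uncontrolled $\partial_t\Sig$ term of order $\dt^0$ survives. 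A secondary delicate point is that the lowest-regularity assumption $\partial_{tt}\uu\in L^2(0,T;\mathbf{L}^{6/5}(\Omega))$ forces the time-truncation term to be paired with $\ee\enn$ through a Sobolev embedding $\mathbf{H}^1_0(\Omega)\hookrightarrow\mathbf{L}^6(\Omega)$ rather than a plain $L^2$ duality, so that remainder must be absorbed into $\dt\Vert\DD\ee\enn\Vert^2$ and is precisely why the smallness of $r\alpha_2^2/\mu_1$ (and the factor $1+C(\Omega)^2$) is needed to leave room.
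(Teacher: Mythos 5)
Your proposal follows essentially the same route as the paper's proof: the same error equation with consistency remainders, the same pressure treatment via testing the Poisson equation to obtain telescoping $\Vert\boldsymbol{\nabla} r^n\Vert^2$ terms absorbed against the velocity jump, the same pointwise choice $\ell=r\alpha\enn$ in the continuous Bingham projection combined with nonexpansiveness of $\mathbb{P}$ so that only a $\theta\,\delta\Sig(\tnn)$ remainder (controlled by $\partial_t\Sig\in L^2(L^2)$) survives, the same use of Theorem~\ref{thm:ErreurPhi} for the density error, the Sobolev embedding $\mathbf{H}^1_0(\Omega)\hookrightarrow\mathbf{L}^6(\Omega)$ for the $\partial_{tt}\uu\in L^2(L^{6/5})$ term, and the discrete Gronwall lemma with the $n=0$ case handled through~\eqref{HypErrPressIni}. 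The plan is correct and matches the paper's argument, including the two delicate points you flag.
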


\begin{proof}
We first derive the error equation which is obtained by subtracting the two first equations in \eqref{ModelContinuFinal} to the first equation in \eqref{Momentum}, namely we find
\begin{equation} \label{ErrorEq0}
\begin{aligned}
  & \frac{1}{\dt} \Big [ \frac{1}{2} ( \rho^{n+1} + \rho \en ) \ee \enn - \rho \en \ee \en \Big ] - \Div \big (2 \mu \enn \DD \ee \enn \big ) \\
  & \quad + \boldsymbol{\nabla} \big ( p(\tnn) - (p \en + q \en ) \big ) = \Div \big ( \alpha( \rho (\tnn) ) \Sig (\tnn) - \alpha \enn \Sig \enn \big ) \\
  & \quad + \Div \Big ( 2 \big (  \mu ( \rho (\tnn) ) - \mu \enn \big ) \DD \uu (\tnn) \Big ) + \boldsymbol{R}^{n+1}_1 + \boldsymbol{R}^{n+1}_2 + \boldsymbol{R}^{n+1}_3 + \boldsymbol{R}^{n+1}_4,
\end{aligned}
\end{equation}
where
\begin{equation*} \begin{aligned}
  & \boldsymbol{R}^{n+1}_1 = \rho \en \frac{\uu ( \tnn) - \uu ( \tn) }{\dt} -  \rho (\tnn) \partial_t \uu ( \tnn), \\
  & \boldsymbol{R}^{n+1}_2 = \frac{1}{2} \uu (\tnn) \bigg ( \frac{ \rho \enn - \rho \en }{\dt} - \partial _t  \rho (\tnn) \bigg ), \\
  & \boldsymbol{R}^{n+1}_3 = \rho \enn \uuu \en \cdot \boldsymbol{\nabla} \uu \enn - \rho (\tnn) \uu (\tnn) \cdot \boldsymbol{\nabla} \uu ( \tnn), \\
  & \boldsymbol{R}^{n+1}_4 = \frac{1}{2} \uu \enn \Div ( \rho \enn \uuu \en ) - \frac{1}{2} \uu ( \tnn ) \Div \big ( \uu (\tnn) \rho (\tnn) \big ).
\end{aligned} \end{equation*}
By virtue of the Korn's inequality (see \cite{Korn}), since $\ee \enn \in \Hh10 $, we have $\Vert \boldsymbol{\nabla} \ee \enn \Vert ^2 \leq 2 \Vert \DD \ee \enn \Vert ^2.$
Invoking the Poincar\'e's inequality ($\Vert \ee \enn \Vert \leq C(\Omega) \Vert \boldsymbol{\nabla} \ee \enn \Vert $) and denoting $ C_P = (1 + C(\Omega)^2)^{-1}$, we obtain
\begin{equation*}
  4 \mu_1 C_P \dt \Vert \ee \enn \Vert ^2 _1 \leq 4 \dt \Vert \sqrt{2 \mu \enn} \DD \ee \enn \Vert ^2.
\end{equation*}
We now take the inner product of equation \eqref{ErrorEq0} with $2\dt \ee \enn $ in $\Ll2$. Using the last inequality we obtain
\begin{equation} \label{ErrorIneq0}
  \begin{aligned}
&\Vert \sigma \enn \ee \enn \Vert ^2 + \Vert \sigma \en( \ee \enn - \ee \en ) \Vert ^2 - \Vert \sigma \en \ee \en \Vert ^2 + 4 \mu_1 C_P \dt \Vert \ee \enn \Vert ^2 _1 \\
  & \leq -2 \dt \Big (  \boldsymbol{\nabla} \big ( p(\tnn) - (p \en + q \en ) \big ) , \ee \enn \Big ) \\
  & \phantom{\leq} -2\dt \big ( \alpha(\rho (\tnn)) \Sig (\tnn) - \alpha \enn \Sig \enn , \DD \ee \enn \big )  \\
  & \phantom{\leq} -4 \dt \Big ( \big (  \mu ( \rho (\tnn) ) - \mu \enn \big ) \DD \uu ( \tnn ) , \DD \ee \enn \Big ) + 2 \dt \Big ( \boldsymbol{R}^{n+1}_1 , \ee \enn \Big ) \\
  & \phantom{\leq} + 2 \dt \Big ( \boldsymbol{R}^{n+1}_2 , \ee \enn \Big ) + 2 \dt \Big ( \boldsymbol{R}^{n+1}_3 , \ee \enn \Big ) + 2 \dt \Big (\boldsymbol{R}^{n+1}_4 , \ee \enn \Big ).
  \end{aligned}
\end{equation}
In the following steps, we derive estimates of the terms occurring in the right-hand side of~\eqref{ErrorIneq0}. They are treated in order of appearance in~\eqref{ErrorIneq0}.\\

\textit{Step 1.} From \eqref{Pression} we obtain, for all $g\in H^1(\Omega)$,
\begin{equation} \label{ErreurPression}
\big ( \boldsymbol{\nabla} ( r \enn - r \en ) , \boldsymbol{\nabla} g \big ) = \frac{\rho_1}{\dt} ( \ee \enn , \boldsymbol{\nabla} g ) + ( \boldsymbol{\nabla} \delta p (\tnn) , \boldsymbol{\nabla} g ).
\end{equation}
Taking $g = - \frac{2 \dt ^2}{\rho_1} \delta^2 r \enn $ in \eqref{ErreurPression}, we have
\begin{align*}
  - \frac{\dt^2}{\rho_1} \Big ( \Vert \boldsymbol{\nabla} ( r \enn - r \en ) \Vert ^2 & - \Vert \boldsymbol{\nabla} ( r \en - r \enm ) \Vert ^2 + \Vert \boldsymbol{\nabla} \delta^2 r \enn  \Vert ^2 \Big ) \\
  & = -2 \dt ( \ee \enn , \boldsymbol{\nabla} \delta^2 r \enn ) - \frac{2 \dt ^2}{\rho_1} \big ( \boldsymbol{\nabla} \delta p ( \tnn ) , \boldsymbol{\nabla} \delta^2 r \enn \big ) .
\end{align*}
Taking $g = \frac{2 \dt ^2}{\rho_1}  r \enn $ in \eqref{ErreurPression}, we have
\begin{align*}
  \frac{\dt^2}{\rho_1} \Big ( \Vert \boldsymbol{\nabla} r \enn \Vert ^2 & - \Vert \boldsymbol{\nabla}  r \en \Vert ^2 + \Vert \boldsymbol{\nabla} \delta  r \enn  \Vert ^2 \Big ) \\
  & = 2 \dt ( \ee \enn , \boldsymbol{\nabla}  r \enn ) + \frac{2 \dt ^2}{\rho_1} \big (\boldsymbol{\nabla} \delta p ( \tnn ) , \boldsymbol{\nabla}  r \enn \big ) .
\end{align*}
By summing these last two equations, we deduce
\begin{equation} \label{dem62}
\begin{aligned}
  &\frac{\dt^2}{\rho_1} \Big ( \Vert \boldsymbol{\nabla} r \enn \Vert ^2 - \Vert \boldsymbol{\nabla}  r \en \Vert ^2 + \Vert \boldsymbol{\nabla} \delta  r \en  \Vert ^2 \Big ) - \frac{\dt ^2}{\rho_1} \Vert \boldsymbol{\nabla} \delta^2 r \enn \Vert ^2  \\
  & \qquad = 2 \dt \big ( \ee \enn , \boldsymbol{\nabla}  (2 r \en - r \enm) \big ) + \frac{2 \dt ^2}{\rho_1} \big ( \boldsymbol{\nabla} \delta p ( \tnn ) , \boldsymbol{\nabla}  (2r \en - r \enm ) \big ) .
\end{aligned}
\end{equation}
From \eqref{ErreurPression}, we have
\begin{equation*}
\big ( \boldsymbol{\nabla} \delta^2 r \enn , \boldsymbol{\nabla} g \big ) = \frac{\rho_1}{\dt} ( \delta \ee \enn , \boldsymbol{\nabla} g ) + ( \boldsymbol{\nabla} \delta^2 p (\tnn) , \boldsymbol{\nabla} g ).
\end{equation*}
Taking $g = \frac{\dt}{\sqrt{\rho_1}} \delta^2 r \enn $ in the above relation, we obtain
\begin{equation} \label{dem63}
\begin{aligned}
&\frac{\dt^2}{\rho_1} \Vert \boldsymbol{\nabla} \delta^2 r \enn \Vert ^2  \leq \frac{1}{\rho_1} \Vert \rho_1  \delta \ee \enn + \dt \boldsymbol{\nabla} \delta^2 p ( \tnn) \Vert ^2 \\
&\qquad\qquad \leq \Vert \sigma ^n \delta \ee \enn \Vert ^2 + \frac{\dt ^2}{\rho_1} \Vert \boldsymbol{\nabla} \delta^2 p ( \tnn ) \Vert ^2 + 2 \dt \big ( \delta \ee \enn , \boldsymbol{\nabla} \delta^2 p(\tnn) \big ).  
\end{aligned} 
\end{equation}
By summing \eqref{dem62} and \eqref{dem63}, we deduce
\begin{equation*}
\begin{aligned}
 \frac{\dt^2}{\rho_1} \Big ( \Vert \boldsymbol{\nabla} r \enn \Vert ^2 - & \Vert \boldsymbol{\nabla}  r \en \Vert ^2 + \Vert \boldsymbol{\nabla} \delta  r \en  \Vert ^2 \Big ) \leq  2 \dt \big ( \ee \enn , \boldsymbol{\nabla}  (2 r \en - r \enm) \big ) \\
 & + \frac{2 \dt ^2}{\rho_1} \big ( \delta p ( \tnn ) , \boldsymbol{\nabla}  (2r \en - r \enm ) \big ) + \Vert \sigma ^n \delta \ee \enn \Vert ^2 \\
 &  + \frac{\dt ^2}{\rho_1} \Vert \boldsymbol{\nabla} \delta^2 p ( \tnn ) \Vert ^2 + 2 \dt \big ( \delta \ee \enn , \boldsymbol{\nabla} \delta^2 p(\tnn) \big ) ,
\end{aligned}
\end{equation*}
which can be rewritten as
\begin{equation*}
\begin{aligned}
 - & 2 \dt \big ( \ee \enn , \boldsymbol{\nabla}  (2 r \en - r \enm) \big ) - 2 \dt \big ( \delta \ee \enn , \boldsymbol{\nabla} \delta^2 p(\tnn) \big ) -2 \dt \big ( \ee \en , \boldsymbol{\nabla} \delta^2 p ( \tnn) \big ) \\
\leq & \underbrace{-2 \dt \big ( \ee \en , \boldsymbol{\nabla} \delta^2 p ( \tnn) \big )}_{a_1} - \frac{\dt^2}{\rho_1} \Big ( \Vert \boldsymbol{\nabla} r \enn \Vert ^2 -  \Vert \boldsymbol{\nabla}  r \en \Vert ^2 + \Vert \boldsymbol{\nabla} \delta  r \en  \Vert ^2 \Big )  \\
& \underbrace{+ \frac{2 \dt ^2}{\rho_1} \big ( \delta p ( \tnn ) , \boldsymbol{\nabla}  (2r \en - r \enm ) \big )}_{a_2} + \Vert \sigma ^n \delta \ee \enn \Vert ^2 + \underbrace{ \frac{\dt ^2}{\rho_1} \Vert \boldsymbol{\nabla} \delta^2 p ( \tnn ) \Vert ^2}_{a_3}.
\end{aligned}
\end{equation*}
Using the Cauchy-Schwarz and Young's inequalities, we obtain
\begin{equation*} \begin{aligned}
a_1  &\leq \dt \Vert \ee \en \Vert ^2 +  2 \dt ^2 \int _{\tnm} ^{\tnn} \Vert \partial _t p (s) \Vert_1 ^2 ds, \\
a_2  & = \frac{2 \dt ^2}{\rho_1} \Big ( \big ( \delta p ( \tnn ) , \boldsymbol{\nabla} r \en \big ) + \big ( \delta p ( \tnn ) , \boldsymbol{\nabla}  \delta r \en  \big ) \Big ) \\ 
     & \leq \frac{2 \dt ^2}{\rho_1} \int_{t_n} ^{\tnn}  \Vert \partial _t p (s) \Vert ^2 ds + \frac{\dt ^3}{\rho_1} \Vert \boldsymbol{\nabla} r \en \Vert ^2 + \frac{\dt ^3}{\rho_1} \Vert \boldsymbol{\nabla} \delta r \en \Vert ^2, \\
a_3  & \leq \frac{2 \dt^3}{\rho_1} \int _{\tnm} ^{\tnn} \Vert \partial _t p(s) \Vert ^2 _1 ds.
\end{aligned} 
\end{equation*}
We finally deduce
\begin{equation*}  \begin{aligned}
 -2 \dt & \Big (  \boldsymbol{\nabla} \big ( p(\tnn) - (p \en + q \en ) \big ) , \ee \enn \Big ) \leq 
- \frac{\dt^2}{\rho_1} \Big ( \Vert \boldsymbol{\nabla} r \enn \Vert ^2 -  \Vert \boldsymbol{\nabla}  r \en \Vert ^2 \Big ) + \frac{\dt ^3}{\rho_1} \Vert \boldsymbol{\nabla} r \en \Vert ^2  \\
& \qquad + \Vert \sigma ^n \delta \ee \enn \Vert ^2 + \dt ^2 \bigg ( \frac{2+4 \dt}{\rho_1} +1 \bigg ) \int_{\tnm} ^{\tnn} \Vert \partial _t p(s) \Vert ^2 _1 ds + \dt \Vert \ee \en \Vert ^2 .
\end{aligned} 
\end{equation*}
\textit{Step 2.} We write
\begin{equation} \label{Dem6Etape1}
\begin{aligned}
-2\dt \big (& \alpha(\rho (\tnn))  \Sig (\tnn) - \alpha \enn \Sig \enn , \DD \ee \enn \big )  \\
& = -2\dt \Big ( \big ( \alpha ( \rho ( \tnn) ) - \alpha \enn \big )  \Sig (\tnn) , \DD \ee \enn \Big ) - 2 \dt \big ( \sss \enn , \alpha \enn \DD \ee \enn \big )\\
& \leq \dt \frac{ \mu_1 C_P}{2} \Vert \ee \enn \Vert _1 ^2 + \dt  \frac{2 \Vert \alpha ' \Vert _\infty ^2}{\mu_1 C_P}  \Vert \varepsilon \enn \Vert ^2 - 2 \dt \big ( \sss \enn , \alpha \enn \DD \ee \enn \big ).
\end{aligned}
\end{equation}
In order to treat the last term in the right-hand side of \eqref{Dem6Etape1}, we use the relation \eqref{PartiePlastiqueContinueD2} on $\Sig$ with $\ell = r \alpha \enn $, namely 
\begin{equation*}
  \Sig ( \tnn ) = \mathbb{P} \big ( \Sig ( \tnn) + r \alpha \enn  \DD \uu ( \tnn) \big ),
\end{equation*}
so that
\begin{equation*}
\sss \enn = \mathbb{P} \big ( \Sig ( \tnn ) + r \alpha \enn  \DD \uu ( \tnn) \big ) - \mathbb{P} \big ( \Sig^{n+1} + r \alpha^{n+1} \DD \uu ^{n+1} + \theta (\Sig^{n} - \Sig^{n+1}) \big ).
\end{equation*}
Since $\mathbb{P}$ is a projection, we have
\begin{equation*}
\vert \sss \enn \vert \leq \vert \sss \enn + r \alpha \enn \DD \ee \enn + \theta ( \sss \en - \sss \enn ) + \theta \delta \Sig ( \tnn ) \vert .
\end{equation*}
Taking the square of this inequality leads to
\begin{equation*}
\begin{aligned}
\vert \sss \enn \vert ^2  \leq & \vert \sss \enn \vert ^2 + r^2 (\alpha \enn )^2 \vert \DD \ee \enn \vert ^2 + \theta ^2 \vert \sss \en - \sss \enn \vert ^2 + \theta ^2 \vert \delta \Sig ( \tnn ) \vert ^2 \\
& +  r \sss \enn :  \alpha \enn \DD \ee \enn +  \theta \sss \enn : (\sss \en - \sss \enn ) +  \theta \sss \enn : \delta \Sig ( \tnn ) \\
& +  r \theta \alpha \enn \DD \ee \enn : ( \sss \en - \sss \enn ) +   r \theta \alpha \enn \DD \ee \enn :  \delta \Sig ( \tnn )  \\
& +  \theta ^2 ( \sss \en - \sss \enn ) :  \delta \Sig ( \tnn )  .
\end{aligned}
\end{equation*}
Using the Cauchy-Schwarz and the Young's inequalities, the identity $2a(a-b)=a^2- b^2 + (a-b)^2,$ integrating over $\Omega$, and multiplying by $\frac{2 \dt}{r}$, we obtain
\begin{equation*}
\begin{aligned}
 - 2 \dt ( \sss \enn , \alpha \enn \DD \ee \enn ) & \leq   6 \dt r \alpha _2 ^2 \Vert \DD \ee \enn \Vert ^2 + \frac{2 \theta }{r} (3 \theta \dt + 1 ) \Vert \delta \Sig ( \tnn ) \Vert ^2  \\
& - \frac{2 \theta}{r} \dt ( 1 - \dt ) \Vert \sss \enn \Vert ^2 + \frac{2 \theta}{r} \dt \Vert \sss \en \Vert ^2 .
\end{aligned}
\end{equation*}
By reporting the last inequality in \eqref{Dem6Etape1} we deduce
\begin{equation*}\begin{aligned}
& -2 \dt \big ( \alpha(\rho (\tnn) ) \Sig (\tnn) - \alpha \enn \Sig \enn , \DD \ee \enn \big )  \\
& \qquad\leq \Big ( \frac{\mu_1 C_P}{2} + 6  r \alpha_2 ^2 \Big ) \dt  \Vert \ee \enn \Vert _1 ^2 + \dt \frac{2 \Vert \alpha ' \Vert _\infty ^2}{ \mu_1 C_P} \Vert \varepsilon \enn \Vert ^2 \\
& \qquad\qquad+ \dt \frac{ 2 \theta(3 \theta \dt +1)}{r} \int_{\tn} ^{\tnn} \Vert \partial_t \Sig (s) \Vert ^2 ds - \frac{2 \dt \theta}{r} \Big ( (1 - \dt ) \Vert \sss \enn \Vert ^2 - \Vert \sss \en \Vert ^2 \Big ).
\end{aligned} 
\end{equation*}
\textit{Step 3.} The third term in the right-hand side of \eqref{ErrorIneq0} is bounded as follows
\begin{equation*}\begin{aligned}
- 4 \dt \Big ( & \big (  \mu ( \rho (\tnn) ) - \mu \enn \big ) \DD \uu ( \tnn ) , \DD \ee \enn \Big ) \\
\leq & \dt \frac{ 8 \Vert \mu ' \Vert _\infty ^2}{\mu_1 C_P} \Vert \DD \uu \Vert_{L^{\infty}(0,T,L^{\infty}(\Omega))} ^2  \Vert \varepsilon \enn \Vert ^2 + \dt \frac{\mu_1 C_P}{2} \Vert \ee \enn \Vert_1^2.
\end{aligned} 
\end{equation*}
\textit{Step 4.} Using the Sobolev embedding $ \Haach1 \subset \LL ^6(\Omega) $ in addition to the Cauchy-Schwarz inequality, the Young's inequality and Taylor's formulae, we deduce
\begin{equation*}
 \begin{aligned}
& 2 \dt (\boldsymbol{R}^{n+1}_1 , \ee \enn) \leq   \frac{6 C_{S}}{\mu_1 C_P} \dt \Vert \varepsilon ^n \Vert ^2 \Vert \partial_t \uu \Vert _{L^{\infty} (0,T,L^3(\Omega))} ^2 + \dt \frac{\mu_1 C_P }{2 } \Vert \ee \enn \Vert ^2 _{1} \\
&  +\frac{6 C_{S}}{\mu_1 C_P} \dt ^2  \Big ( \rho^2_2 \int_{t_n} ^{\tnn} \Vert \partial _{tt} \uu (s) \Vert ^2 _{L^{6/5}(\Omega)} ds + \Vert \partial _{t} \uu \Vert ^2 _{L^{\infty} (0,T,L^3(\Omega))} \int_{t_n} ^{\tnn} \Vert \partial _{t} \rho (s) \Vert ^2 ds \Big ), 
\end{aligned} 
\end{equation*}
where $C_{S}$ is the continuity constant from the Sobolev embedding $ \mathbf{H}^1(\Omega) \subset \mathbf{L}^6(\Omega)$, that is, for all $f\in \mathbf{H}^1(\Omega),$ we have $\Vert f\Vert_{L^6(\Omega)}^2 \leq C_S \Vert f\Vert_1^2.$ \\

\textit{Step 5.} Using \eqref{PhaseEquation} and the first equation of \eqref{ModelContinuFinal}, we write
\begin{equation*}
  \boldsymbol{R}^{n+1}_2 = \frac{1}{2} \uu(\tnn) \bigg( \delta \uu(\tnn)\cdot\boldsymbol{\nabla}\rho(\tnn) + \uu(\tn)\cdot\boldsymbol{\nabla}\varepsilon \enn + \eee \en\cdot\boldsymbol{\nabla} \rho \enn  \bigg ).
\end{equation*}
Recalling that both $\uu$ and $\eee\en$ are divergence-free vector fields, we obtain
\begin{equation*}
  \boldsymbol{R}^{n+1}_2 = \frac{1}{2} \uu ( \tnn ) \Div \bigg ( \delta \uu ( \tnn ) \rho ( \tnn ) + \uu ( \tn) \varepsilon \enn + \eee \en \rho \enn  \bigg ).
\end{equation*}
From the above equality, performing an integration by parts and using the identity
$\boldsymbol{\nabla} ( \uu \cdot \vv ) = \boldsymbol{\nabla} \uu \cdot \vv + \boldsymbol{\nabla} \vv \cdot \uu,$ for any $\uu$ and $\vv$ in $\Haach1$, we derive
\begin{equation*}\begin{aligned}
  2 \dt ( \boldsymbol{R}\enn_2 ,\ee\enn)  = &- \dt \bigg (\io \rho\enn\eee\en \cdot\Bigl(\boldsymbol{\nabla}\uu(\tnn)\cdot\ee\enn + \boldsymbol{\nabla}\ee\enn\cdot \uu(\tnn)\Bigr) \\
& + \io \varepsilon \enn\uu(\tn)\cdot\Bigl(\boldsymbol{\nabla}\uu(\tnn)\cdot \ee\enn + \boldsymbol{\nabla}\ee \enn\cdot \uu ( \tnn )\Bigr) \\
&  + \io \rho (\tnn)\delta\uu(\tnn)\cdot\Bigl(\boldsymbol{\nabla}\uu(\tnn)\cdot\ee\enn+\boldsymbol{\nabla}\ee\enn\cdot\uu(\tnn)\Bigr)\bigg).
\end{aligned} 
\end{equation*}
Proceeding as in the previous steps, we deduce
\begin{equation*}
\begin{aligned}
  & 2 \dt (\boldsymbol{R}\enn_2,\ee\enn) \leq \frac{\mu_1 C_P }{2} \dt \Vert \ee \enn \Vert _1 ^2  \\      
  & \qquad+ \frac{3\dt}{\mu_1 C_P} \Bigl( \Vert \uu \Vert _{L^{\infty} (0,T,L^{\infty}(\Omega))} ^2 + C_S \Vert\boldsymbol{\nabla} \uu \Vert _{L^{\infty} (0,T,L^3(\Omega))} ^2 \Bigr) \\
  & \qquad\phantom{\frac{+ \dt 5(\rho_2 - \rho_1 )^2}{4 \mu_1 C_P}}\times\Bigl( \rho_2^2 \Vert\eee\en\Vert^2 + \Vert \uu \Vert _{L^{\infty} (0,T,L^{\infty}(\Omega))}^2 \Vert \varepsilon \enn \Vert ^2\Bigr)  \\
  & \qquad+   \frac{3\rho_2^2}{\mu_1 C_P} \delta t^2 \Bigl( \Vert \uu \Vert _{L^{\infty} (0,T,L^{\infty}(\Omega))} ^2 + C_S \Vert\boldsymbol{\nabla} \uu \Vert _{L^{\infty} (0,T,L^3(\Omega))} ^2 \Bigr) \int_{\tn}^{\tnn} \Vert\partial_t\uu(s)\Vert^2 ds. \\
\end{aligned}
\end{equation*}
\textit{Step 6.} Using \eqref{CalculBaseStability}, we have
\begin{equation*} \begin{aligned}
  & 2 \dt (\boldsymbol{R}^{n+1}_3 + \boldsymbol{R}^{n+1}_4 , \ee \enn )  = - 2 \dt \tilde{b} \Big ( \rho (\tnn) \delta \uu ( \tnn ), \uu ( \tnn ) , \ee \enn \Big ) \\
  & \qquad- 2 \dt \tilde{b} \big ( \rho \enn \eee \en , \uu ( \tnn ) , \ee \enn \big ) - 2 \dt \tilde{b} \big ( \varepsilon \enn \uu (\tn) , \uu ( \tnn) , \ee \enn \big ).
\end{aligned} 
\end{equation*}
Proceeding as above, we obtain
\begin{equation*} \begin{aligned}
& 2\dt (\boldsymbol{R}^{n+1}_3 + \boldsymbol{R}^{n+1}_4 ,\ee\enn) \leq \dt \frac{\mu_1 C_P}{2} \Vert \ee \enn \Vert _1 ^2\\
& +\dt \frac{3 \rho_2^2}{\mu_1 C_P \rho _1 ^2} \big( \Vert \uu \Vert^2_{L^\infty(0,T,L^\infty (\Omega))} + 9C_S \Vert \boldsymbol{\nabla} \uu \Vert^2_{L^\infty(0,T,L^3(\Omega))}\big )  \Vert \sigma \en \ee \en \Vert ^2  \\
& + \dt \frac{3}{\mu_1 C_P} \Vert\uu\Vert_{L^\infty(0,T,L^\infty (\Omega))}^2 \big( \Vert \uu \Vert^2_{L^\infty(0,T,L^\infty (\Omega))} +9C_S \Vert \boldsymbol{\nabla} \uu \Vert^2_{L^\infty(0,T,L^3(\Omega))}\big ) \Vert \varepsilon \enn \Vert ^2 \\
& + \dt ^2 \frac{3 \rho_2 ^2}{\mu_1 C_P} \big( \Vert \uu \Vert^2_{L^\infty(0,T,L^\infty (\Omega))} + 9C_S \Vert \boldsymbol{\nabla} \uu \Vert^2_{L^\infty(0,T,L^3(\Omega))}\big ) \int_{\tn} ^{\tnn} \Vert \partial _t \uu (s) \Vert ^2 ds.  \\
\end{aligned} 
\end{equation*}
\textit{Step 7.} The final estimate is obtained by applying the discrete Gronwall Lemma~\ref{lem:LGdiscret}. Let us first consider the particular case $n=0$.
Using hypothesis \eqref{HypErrPressIni} and the previous error equations, we deduce
\begin{equation*}
  \Vert \ee ^1 \Vert ^2 + \dt \Vert \ee ^1 \Vert _1 ^2 + \dt \theta \Vert s^1 \Vert ^2 + \dt^2 \Vert \boldsymbol{\nabla} r ^1 \Vert ^2 \leq C \dt (\theta + \dt).
\end{equation*}
General case $(n \geq 1 )$. Now, using steps $1$ to $7$, we finally obtain
\begin{equation*}\begin{aligned}
  & \Vert \sigma\enn \ee\enn \Vert^2 - \Vert \sigma\en \ee\en \Vert^2 + \frac{\dt^2}{\rho_1} \Bigl( \Vert \boldsymbol{\nabla}r\enn \Vert^2 - \Vert \boldsymbol{\nabla} r\en \Vert^2 \Bigr) \\
  & \qquad + \frac{2\theta\dt}{r} \Bigl( \Vert \sss\enn \Vert^2 - \Vert \sss\en \Vert^2 \Bigr) + \frac 32 \bigl( \mu_1C_P - 4r\alpha^2_2 \bigr) \dt \Vert \ee\enn \Vert^2_1 \\
  & \leq \frac{\dt^3}{\rho_1} \Vert \boldsymbol{\nabla}r\en \Vert^2 + \frac{ 2\theta\dt^2}{r} \Vert \sss\enn \Vert^2  + \Bigl( 1 + \frac{6\rho_2^2}{\mu_1C_P} K(\uu) \Bigr) \frac{\dt}{\rho_1^2}\Vert \sigma\en \ee\en \Vert^2 \\
  & + \frac{2}{\mu_1C_P} \Bigl( 3\Vert\uu\Vert_{L^\infty(0,T,L^\infty (\Omega))}^2 K(\uu) + \Vert \alpha^\prime \Vert_\infty^2 + 4 \Vert \mu^\prime\Vert_\infty^2 \Vert\DD\uu\Vert_{L^{\infty}(0,T,L^{\infty}(\Omega))}^2\Bigr) \dt \Vert \varepsilon\enn \Vert ^2 \\
  & + \frac{6 C_{S}}{\mu_1 C_P} \Vert \partial_t \uu \Vert _{L^{\infty} (0,T,L^3(\Omega))} ^2  \dt \Vert \varepsilon ^n \Vert ^2 + \frac{6\rho_2^2}{\mu_1 C_P} K(\uu)\dt^2 \int_{\tn}^{\tnn} \Vert \partial_t \uu(s) \Vert^2 ds \\
  & + \frac 2r (3 \theta \dt +1) \theta\dt \int_{\tn} ^{\tnn} \Vert \partial_t \Sig(s) \Vert^2 ds + \Bigl(1+\frac{2}{\rho_1}(1+2\dt)\Bigr) \dt^2 \int_{\tnm}^{\tnn} \Vert \partial_t p(s) \Vert^2_1 ds  \\
  & + \frac{6 C_{S}}{\mu_1 C_P} \dt^2 \Bigl( \rho^2_2 \int_{t_n}^{\tnn} \Vert \partial_{tt}\uu(s) \Vert^2_{L^{6/5}(\Omega)} ds +  \Vert \partial_{t} \uu \Vert^2_{L^{\infty}(0,T,L^3(\Omega))} \int_{t_n}^{\tnn} \Vert \partial_{t}\rho(s)\Vert^2 ds \Bigr),
\end{aligned}
\end{equation*}
with $K(\uu)=\big( \Vert \uu \Vert^2_{L^\infty(0,T,L^\infty (\Omega))} + 9C_S \Vert \boldsymbol{\nabla} \uu \Vert^2_{L^\infty(0,T,L^3(\Omega))}\big)$.
Summing the last equation over $n$ from $1$ to $k$ with $0 \leq k \leq N-1$, using $6r \alpha_2 ^2 \leq \frac{\mu_1 C_P}{2}$, Theorem~\ref{thm:ErreurPhi},
the discrete Gronwall Lemma~\ref{lem:LGdiscret} and the case $n=0$, we deduce, for all $0 \leq k \leq N-1$,
\begin{equation} \label{eqFinal}
  \begin{aligned}
    \Vert \sigma ^{k+1} \ee^{k+1} \Vert^2 + \frac{\dt^2}{ \rho_1} \Vert \boldsymbol{\nabla} r^{k+1} \Vert ^2  & + \frac{2\theta\dt}r \Vert s^{k+1} \Vert ^2 \\
    & + \dt \mu_1 C_P \sum_{n=0} ^{N-1} \Vert \ee \enn \Vert ^2 _1   \leq C \dt ( \dt + \theta ),
  \end{aligned}
\end{equation}
which concludes the proof of Theorem~\ref{thm:ErreurFinal}.
\end{proof}
\begin{rem}\label{rem:PressureBound}
From the inequality~\eqref{eqFinal}, we deduce that, for all $n$ such that $ 0 \leq n \leq N-1, $
$$ \Vert \boldsymbol{\nabla} r^{n+1} \Vert ^2 \leq C \frac{ \dt + \theta}{ \dt}.  $$
Taking $ \theta = \dt $, we obtain, for all $n$ such that $ 0 \leq n \leq N-1 $, $$ \Vert \boldsymbol{\nabla} p^{n+1} \Vert \leq C ,  $$  where $C$ is independent of $\dt$.
\end{rem}

\section{Conclusion}
We have proposed and analyzed in this paper a new scheme for the temporal discretization of incompressible equations describing the motion of Bingham fluids with variable density, plastic viscosity
 and yield stress.
A fractional time-stepping algorithm is coupled with a projection formulation for the definition of the plastic tensor. This approach, also used in the Uzawa-like algorithm for solving
viscoplastic flows, allows to handle the non-differentiable definition of the stress tensor in the Bingham constitutive laws. 
The first sub-time step of the bi-projection scheme consists in computing a non-solenoidal velocity field and is followed by a projection step so that the final velocity field is divergence free.
In classical projection methods, the Helmholtz decomposition is invoked for this projection resulting in a Poisson equation satisfied by the pseudo-pressure. Extending this approach
to density-variable flows will lead to a second-order elliptic equation with variable coefficients for the pressure. Another approach based on the interpretation of projection methods
in terms of a penalty method, Guermond and Salgado in~\cite{GuermondSalgado4,GuermondSalgado5} derived a fractional time-stepping scheme where the computation of the pressure is
achieved by solving one standard Poisson equation per time step. Unlike in~\cite{GuermondSalgado4,GuermondSalgado5}, the divergence-free velocity is used as convective velocity in the
mass conservation equation. This point is of major importance as it ensures a maximum principle for the discrete continuity equation. Lower and upper bounds on the density
are obtained.

The plastic part of the stress tensor is treated implicitly into the first sub-step (prediction step) of the fractional time-stepping method. As in the Uzawa-like method for Bingham flows,
a fixed-point algorithm is used to compute the plastic tensor.
Due to a pseudo-time relaxation term added in the Bingham projection operator, the convergence is geometric with common ratio $(1-\theta)$ where $\theta$ is a prescribed relaxation parameter.
The main results derived in this paper are the unconditional stability and the convergence of the bi-projection scheme. More precisely, errors committed by approximating the velocity and 
the density are bounded from above by a term of the order of $\sqrt{\delta t(\delta t+\theta)}$. Hence, by choosing $\theta$ of the order of the time step ensures the error of the bi-projection scheme to be of first order.

\section*{Acknowledgements}
This research was financed by the French Government Laboratory of Excellence initiative n$^\textrm{o}$ANR-10-LABX-0006, by the French
National Research Agency (ANR) RAVEX project, and by the French National Center for Scientific Research (CNRS) TelluS project.
This is Laboratory of Excellence ClerVolc contribution number $302$.

\bibliographystyle{plain}
\bibliography{bibfi.bib}

\end{document}